\theoremstyle{plain}
\newtheorem{theorem}{Theorem}
\theoremstyle{definition}
\begin{document}

\title{Direct Finite-Time Contraction (Step-Log) Profiling--Driven Optimization of Parallel Schemes for Nonlinear Problems on Multicore Architectures}
\author{Mudassir Shams$^1$$^,$$^2$$^,$$^*$, Andrei Velichko$^3$, Bruno Carpentieri$^2$ }
\date{$^1$Department of Mathematics, Faculty of Arts and Science, Balikesir
University, Balikesir 10145, Turkey\\
$^2$Faculty of Engineering, Free University of Bozen-Bolzano, 39100,
Bolzano, Italy\\
$^3$Institute of Physics and Technology, Petrozavodsk State University, 185910 Petrozavodsk, Russia \\
\texttt{mudassir.shams@balikesir.edu.tr;velichko@petrsu.ru}\\
\texttt{bruno.carpentieri@unibz.it}\\
\texttt{$^*$Correspondence:Mudassir Shams: Email:mudassir.shams@balikesir.edu.tr}\\
[2ex]\today
}
\maketitle

\begin{abstract}
Efficient computation of all distinct solutions of nonlinear problems is essential in many scientific and engineering applications. Although high-order parallel iterative schemes offer fast convergence, their practical performance is often limited by sensitivity to internal parameters and the lack of reproducible tuning procedures. Classical parameter selection tools based on analytical conditions and dynamical-system diagnostics can be problem-dependent and computationally demanding, which motivates lightweight data-driven alternatives.

In this study, we propose a parameterized single-step bi-parametric parallel Weierstrass-type scheme with third-order convergence together with a training-free tuning framework based on \emph{Direct finite-time contraction (step-log) profiling}. The approach extracts Lyapunov-like finite-time contraction information directly from solver trajectories via step norms and step-log ratios, aggregates the resulting profiles over micro-launch ensembles, and ranks parameter candidates using two compact scores: the stability minimum $S_{\min}$ and the stability moment $S_{mom}$. Numerical results demonstrate consistent improvements in convergence rate, stability, and robustness across diverse nonlinear test problems, establishing the proposed profiling-based strategy as an efficient and reproducible alternative to classical parameter tuning methods.\\
\noindent\textbf{Keywords:} {Bi-parametric scheme; parameter tuning; parallel root-finding; finite-time contraction; step-log profiling; profile-based stability scores}
\end{abstract}

\section{Introduction}

Nonlinear equations arise ubiquitously in mathematical models of physical, engineering, and biological systems. Problems in fluid dynamics, chemical kinetics, electrical circuits, control theory, biomedical engineering, and optimization often lead to nonlinear algebraic equations whose analytical solutions are unavailable~\cite{1,2}. As a result, iterative numerical methods remain the primary computational tools for approximating their solutions. Although the theory of nonlinear equations is well established, the practical efficiency of numerical solvers~\cite{3} strongly depends on the design of the iterative scheme and, crucially, on the appropriate choice of method parameters~\cite{4}. Modern iterative methods for nonlinear problems are rarely parameter-free. Parameters naturally appear when higher-order corrections, weight functions, or acceleration strategies are introduced to improve convergence speed and stability. While these parameters provide additional flexibility, they also introduce a significant challenge: the convergence order, convergence rate, numerical stability, and robustness of the scheme may vary dramatically with parameter values. In many applications, particularly those involving high-degree polynomials or nonlinear systems with cluster solutions, it is necessary to compute all distinct roots rather than a single solution. This requirement has motivated the development of parallel iterative schemes~\cite{5}, which approximate all roots in a unified computational process utilizing parallel architecture on a multi-processor on a computing machine~\cite{6}. Among these, Weierstrass-type methods and their variants are widely regarded as fundamental parallel root-finding techniques.

In parameterized parallel schemes, this dependence is even more pronounced due to nonlinear interactions among the approximations of different roots. A parameter choice that performs well for one class of problems may lead to slow convergence, divergence, or chaotic behavior for another. Consequently, parameter tuning is not a secondary issue but a central component in the practical effectiveness of efficient iterative schemes for nonlinear problems.

Traditionally, parameter selection in iterative methods has relied on analytical convergence analysis, heuristic rules~\cite{7}, and extensive numerical experimentation. In the context of parallel schemes, tools from real and complex dynamical~\cite{8} systems have played a prominent role. These include parametric planes~\cite{9}, bifurcation diagrams~\cite{10}, critical point analysis, and basin-of-attraction visualizations.

Although such tools provide valuable qualitative insight into the global dynamics of iterative methods, they exhibit several limitations.
\begin{itemize}
    \renewcommand{\labelitemi}{--}
	\item First, they are computationally intensive and often restricted to low-dimensional parameter spaces.
    \item Second, their interpretation may depend on visual inspection and subjective judgment. 
    \item Third, they are not easily adaptable to automated or large-scale parameter optimization.
\end{itemize} 
As a result, the selection of optimal parameters often remains problem-dependent and lacks systematic reproducibility~\cite{11}. Parallel root-finding schemes are particularly sensitive to parameter choices because the update of each root approximation depends on the collective behavior of all others. Small parameter variations can significantly alter the balance between convergence acceleration and numerical stability. From a computational perspective, optimal parameter tuning directly translates into fewer iterations, reduced computational cost, and improved robustness when computing all distinct roots. Therefore, the development of reliable parameter optimization strategies is essential for fully exploiting the potential of high-order parallel iterative schemes, especially in large-scale or real-time applications.


Recent advances in machine learning provide powerful data-driven alternatives for parameter tuning in numerical algorithms~\cite{12}. In particular, learning-based estimators can infer stability indicators directly from trajectory data and thereby complement classical analytical conditions and visual dynamical diagnostics. In our preliminary investigations, we used a kNN-based learning estimator for largest Lyapunov exponents (LLE)~\cite{13} as an exploratory diagnostic tool and observed that \emph{finite-time Lyapunov-style profiles} computed from solver trajectories are highly sensitive to parameter-induced stability changes in parallel root-finding schemes. In practice, however, such learning-based pipelines typically require repeated neighborhood searches, embedding/model choices, and additional hyperparameter tuning, which increases the computational and implementation overhead when a dense parameter grid must be scanned.

Motivated by these observations, in this work we introduce a lightweight Lyapunov-like alternative termed \emph{Direct finite-time contraction (step-log) profiling}. The method avoids training and manifold reconstruction and instead extracts contraction information directly from iteration histories. Specifically, we compute step norms and their step-log ratios to form finite-time contraction profiles, which act as a practical proxy for Lyapunov-type stability behavior. Two compact profile-based performance metrics, namely the stability minimum $S_{\min}$ and the stability moment $S_{mom}$, are then used to quantify convergence quality across a diverse set of nonlinear test problems and to identify parameter values that consistently yield fast and stable convergence.


The main contributions of this study are summarized as follows:
\begin{itemize}
\item Development of a parameterized parallel Weierstrass-type scheme accelerated by a single-step correction mechanism, increasing the convergence order from two to three.
\item A systematic investigation of the influence of the real-valued parameters on convergence order, convergence rate, and stability of the parallel iteration.
\item Introduction of \emph{Direct finite-time contraction (step-log) profiling} as a training-free, Lyapunov-like diagnostic for detecting parameter-induced stability changes from solver trajectories.
\item Definition and application of two profile-based scores, the minimum-based metric $S_{\min}$ and the moment-based metric $S_{mom}$, enabling reproducible parameter selection over scanned parameter grids.
\item Extensive numerical evidence across diverse nonlinear and application-driven test problems demonstrating consistent improvements in convergence speed, robustness, and stability for simultaneous computation of distinct roots.
\end{itemize}


\noindent\textbf{Novelty and positioning.}
The main contribution of this study is a \emph{general, quantitative framework}
for \emph{finite-time parameter optimization} in high-order parallel iterative
methods for nonlinear equations.
Rather than relying on qualitative dynamical visualizations
(e.g., basins of attraction, bifurcation-style parameter inspection),
we propose a \emph{direct finite-time contraction (step-log) profiling} approach
that extracts Lyapunov-like contraction information \emph{directly} from solver
trajectories and summarizes it through compact, reproducible scores.
The framework is demonstrated on a bi-parametric, third-order Weierstrass-type
parallel scheme (SAB$^{[3]}$), where the profile-based metrics
$S_{\min}$ and $S_{mom}$ provide an automated and implementation-light rule for
selecting robust parameter regions over scanned grids.
Learning-based kNN/LLE estimators are mentioned only as preliminary motivation:
in contrast, the proposed profiling pipeline is \emph{training-free} and avoids
model-selection overhead, while remaining sensitive to parameter-induced
stability changes in the transient iteration regime.

\medskip
\noindent\textbf{Paper organization.}
The remainder of the paper is organized as follows.
Section~2 presents the proposed parallel iterative scheme and its theoretical
convergence properties.
Section~3 introduces the direct finite-time contraction (step-log) profiling pipeline,
defines the aggregated profiles and the metrics $S_{\min}$ and $S_{mom}$, and describes
the parameter-plane exploration and selection strategy.
Section~4 reports numerical experiments and comparative studies across several
nonlinear test problems.
Finally, Section~5 concludes the paper and outlines limitations and future research directions.



\section{Proposed Parallel Schemes and Convergence Methodology}

This section presents the proposed inverse parallel iterative schemes and outlines the adopted methodological framework. We briefly motivate the use of parallel root-finding strategies and recall classical schemes that underpin the proposed approach see e.g., ~\cite{15,16,17}.

The mathematical formulation of the methods is provided together with a Lyapunov-based convergence analysis, a direct finite-time contraction (step-log) profiling procedure, and a summary of the experimental design and numerical evaluation.

Following the impossibility theorems~\cite{18}, parallel methods have become essential for approximating all roots of nonlinear equations,
\begin{equation}
f(x)=0,\label{1Q}
\end{equation} 
particularly polynomials, due to their global convergence, numerical stability, and robustness to initial guesses~\cite{19}. Their simultaneous computation of distinct and multiple roots, combined with an inherently parallel structure suitable for multicore, GPU, and distributed architectures, ensures scalable, efficient, and reliable performance in large-scale scientific and engineering applications.

Among the classical simultaneous root-finding schemes, the well-known
Weierstrass--Durand--Kerner (WDK) method~\cite{20} is widely used due to its
simplicity and local quadratic convergence. It is defined as
\begin{equation}
x_{i}^{[h+1]}=x_{i}^{[h]}-\frac{f\!\left(x_{i}^{[h]}\right)}
{\displaystyle\prod_{\substack{ j=1 \\ j\neq i}}^{n}\left(x_{i}^{[h]}-x_{j}^{[h]}\right)}.
\label{1c}
\end{equation}

To enhance the convergence order, Nourein~\cite{20a} proposed a third-order modification,
denoted by PNS$^{[3]}$, which replaces the product in the denominator with a
weighted summation in (\ref{1c}):
\begin{equation}
x_{i}^{[h+1]}=x_{i}^{[h]}-\frac{f\!\left(x_{i}^{[h]}\right)}
{\displaystyle\sum_{\substack{ j=1 \\ j\neq i}}^{n}\left(
\frac{\mathcal{P}\!\left(x_{j}^{[h]}\right)}{x_{j}^{[h]}-x_{i}^{[h]}}
\right)},
\end{equation}
where
\[
\mathcal{P}\!\left(x_{i}^{[h]}\right)=
\frac{f\!\left(x_{i}^{[h]}\right)}
{\displaystyle\prod_{\substack{ j=1 \\ j\neq i}}^{n}\left(x_{i}^{[h]}-x_{j}^{[h]}\right)}.
\]

Petkovi\'{c} \emph{et al.}~\cite{21a} introduced another higher-order modification of (\ref{1c}),
referred to as PPS$^{[3]}$, achieving convergence of order $r>2$ through a
partial correction in the denominator:
\begin{equation}
x_{i}^{[h+1]}=x_{i}^{[h]}-
\frac{f\!\left(x_{i}^{[h]}\right)}
{\displaystyle
\prod_{j=1}^{i-1}\!\left(x_{i}^{[h]}-x_{j}^{[h]}-
\mathcal{P}\!\left(x_{i}^{[h]}\right)\right)
\prod_{j=i+1}^{n}\!\left(x_{i}^{[h]}-x_{j}^{[h]}\right)}.
\end{equation}

B\"{o}rsch--Supan~\cite{22a} proposed a third-order simultaneous scheme, denoted by
BSS$^{[3]}$, which incorporates correction terms into the denominator to
accelerate convergence:
\begin{equation}
x_{i}^{[h+1]}=x_{i}^{[h]}-
\frac{\mathcal{P}\!\left(x_{i}^{[h]}\right)}
{\displaystyle
1+\sum_{\substack{ j=1 \\ j\neq i}}^{n}
\left(\frac{\mathcal{P}\!\left(x_{j}^{[h]}\right)}
{x_{i}^{[h]}-x_{j}^{[h]}}\right)}.
\end{equation}

A further refinement was presented by Nourein in the fourth-order method
PNS$^{[4]}$~\cite{21}, which modifies the interaction term among approximations:
\begin{equation}
x_{i}^{[h+1]}=x_{i}^{[h]}-
\frac{\mathcal{P}\!\left(x_{i}^{[h]}\right)}
{\displaystyle
1+\sum_{\substack{ j=1 \\ j\neq i}}^{n}
\left(\frac{\mathcal{P}\!\left(x_{j}^{[h]}\right)}
{x_{i}^{[h]}-\mathcal{P}\!\left(x_{j}^{[h]}\right)-x_{j}^{[h]}}\right)}.
\end{equation}

In addition to these classical schemes, numerous multi-step and high-order simultaneous methods have been developed to improve convergence speed and numerical stability. Notable contributions include those of Petković et al.~\cite{23}, Proinov~\cite{24}, Mir et al.~\cite{25}, Nedzhibov~\cite{26}, and Cordero et al.~\cite{27}, among others (see, for example, \cite{28,29,30} and the references therein). These works provide the theoretical and algorithmic basis upon which the present inverse parallel schemes are constructed.

\bigskip

\noindent\textbf{Proposed Inverse Parallel Fractional Scheme.}
We begin with a parameterized single-root iterative scheme~\cite{31} for approximating a simple root of a nonlinear equation of (\ref{1Q}) as given 
\begin{equation}
x^{[h+1]}=x^{[h]}-\frac{f(x^{[h]})}{f^{\prime }(x^{[h]})}
\left(
\frac{1}{1+\frac{\alpha f(x^{[h]})}{1+\beta f(x^{[h]})}}
\right).\label{1a}
\end{equation}
By incorporating the classical Weierstrass correction, the above self-correcting formulation is extended to a parallel framework for the simultaneous approximation of all roots of a nonlinear equation:
\begin{equation}
x_{i}^{[h+1]}=x_{i}^{[h]}-
\frac{f(x_{i}^{[h]})}
{\displaystyle\prod_{\substack{ j=1 \\ j\neq i}}^{n}\left( x_{i}^{[h]}-z_{j}^{[h]}\right)}
,\label{1b}
\end{equation}
where the auxiliary predictor is defined as
\begin{equation*}
z_{j}^{[h]}=x_{j}^{[h]}-\frac{f(x_{j}^{[h]})}{f^{\prime }(x_{j}^{[h]})}\left(
\frac{1}{1+\frac{\alpha f(x_{j}^{[h]})}{1+\beta f(x_{j}^{[h]})}}
\right),
\end{equation*}
and $\alpha,\beta\in\mathbb{R}$ are free parameters controlling the acceleration behavior. The proposed scheme (\ref{1b}) is abbreviated as (SAB$^{[3]}$).

\subsection{Local Convergence Analysis}

We now analyze the local convergence properties of the proposed inverse parallel fractional scheme. The following theorem establishes its convergence order under standard smoothness assumptions.

\begin{theorem}
Let $\zeta _{1},\ldots ,\zeta _{\upsilon }$ be simple zeros of a nonlinear equation $f(x)=0$. If the initial approximations
$x_{1}^{[0]},\ldots ,x_{\upsilon }^{[0]}$ are sufficiently close to their corresponding exact roots, then the order SAB$^{[3]}$ method defined above converges with order three.
\end{theorem}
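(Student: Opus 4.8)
\medskip
\noindent\textbf{Proof idea.}
The plan is to follow the classical error-propagation analysis for Weierstrass-type simultaneous methods, combining a Taylor estimate for the single-step predictor with the Weierstrass product identity. Write $\varepsilon_i = x_i^{[h]}-\zeta_i$ and $\varepsilon_i^{*}=x_i^{[h+1]}-\zeta_i$ for the errors of the outer iteration and $\widetilde{\varepsilon}_j = z_j^{[h]}-\zeta_j$ for the predictor error, and set $\varepsilon=\max_k|\varepsilon_k|$. As is standard for such schemes, I would assume $f$ is a monic polynomial with the given simple zeros, $f(x)=\prod_{k=1}^{\upsilon}(x-\zeta_k)$; for a general analytic $f$ one factors $f=g\cdot\prod_k(x-\zeta_k)$ with $g(\zeta_k)\neq0$ and carries the smooth cofactor $g$ through the same steps.

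First I would estimate the predictor. Taylor expansion of $f(x_j^{[h]})$ and $f'(x_j^{[h]})$ about $\zeta_j$ gives $f(x_j^{[h]})/f'(x_j^{[h]}) = \varepsilon_j - c_{2,j}\varepsilon_j^2 + O(\varepsilon_j^3)$ with $c_{2,j}=f''(\zeta_j)/(2f'(\zeta_j))$, while $\alpha f(x_j^{[h]})/(1+\beta f(x_j^{[h]})) = \alpha f'(\zeta_j)\,\varepsilon_j + O(\varepsilon_j^2)$, so the bracketed correction factor in the definition of $z_j^{[h]}$ equals $1-\alpha f'(\zeta_j)\,\varepsilon_j + O(\varepsilon_j^2)$. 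Multiplying out and subtracting from $\varepsilon_j$ yields
\[
\widetilde{\varepsilon}_j = \bigl(c_{2,j}+\alpha f'(\zeta_j)\bigr)\varepsilon_j^{2} + O(\varepsilon_j^{3}),
\]
so the predictor is (at least) quadratically convergent, $\widetilde{\varepsilon}_j=O(\varepsilon_j^2)$. (The parameter $\beta$ first enters at order $\varepsilon_j^3$, consistent with the bi-parametric label.)

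Next I would insert the factorization of $f$ into the update. Since $f(x_i^{[h]}) = (x_i^{[h]}-\zeta_i)\prod_{j\neq i}(x_i^{[h]}-\zeta_j)$,
\[
\varepsilon_i^{*}
= \varepsilon_i - \varepsilon_i\prod_{j\neq i}\frac{x_i^{[h]}-\zeta_j}{x_i^{[h]}-z_j^{[h]}}
= \varepsilon_i\left(1-\prod_{j\neq i}\Bigl(1+\frac{\widetilde{\varepsilon}_j}{x_i^{[h]}-z_j^{[h]}}\Bigr)\right).
\]
Because the initial guesses lie near the pairwise-distinct roots, the predictor estimate shows $z_j^{[h]}$ is also near $\zeta_j$, so $x_i^{[h]}-z_j^{[h]}=(\zeta_i-\zeta_j)+O(\varepsilon)$ is bounded away from $0$ (using $\min_{i\neq j}|\zeta_i-\zeta_j|>0$). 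Expanding the product and keeping leading terms,
\[
\prod_{j\neq i}\Bigl(1+\tfrac{\widetilde{\varepsilon}_j}{x_i^{[h]}-z_j^{[h]}}\Bigr)
= 1 + \sum_{j\neq i}\frac{\widetilde{\varepsilon}_j}{x_i^{[h]}-z_j^{[h]}} + O(\varepsilon^{4}),
\]
hence, substituting $\widetilde{\varepsilon}_j=O(\varepsilon_j^2)$ and $x_i^{[h]}-z_j^{[h]}\to\zeta_i-\zeta_j$,
\[
\varepsilon_i^{*}
= -\,\varepsilon_i\sum_{j\neq i}\frac{\bigl(c_{2,j}+\alpha f'(\zeta_j)\bigr)\varepsilon_j^{2}}{\zeta_i-\zeta_j} + O(\varepsilon^{4}).
\]
This gives $|\varepsilon_i^{*}|\le C\,\varepsilon^{3}$ for every $i$, i.e.\ the SAB$^{[3]}$ iteration is (at least) third order, and the displayed relation also exhibits the asymptotic error constant.

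The routine parts are the Taylor bookkeeping for the predictor and the multilinear expansion of the product. The main obstacle is the \emph{a posteriori} control of the neglected terms: one must show that a sufficiently small initial error stays small for all $h$, which is a short induction built on the error relation above together with the separation $\min_{i\neq j}|\zeta_i-\zeta_j|>0$, guaranteeing the denominators $x_i^{[h]}-z_j^{[h]}$ never degenerate. The second delicate point, if one insists on a genuinely nonlinear (non-polynomial) $f$, is tracking the smooth cofactor $g$ through the cancellation, since it no longer disappears identically and contributes $O(\varepsilon)$ corrections that must be checked to be harmless.
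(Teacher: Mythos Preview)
Your proposal is correct and follows essentially the same route as the paper: factor $f$ through its simple zeros, rewrite the update error as $\varepsilon_i\bigl(1-\prod_{j\neq i}(1+\widetilde{\varepsilon}_j/(x_i^{[h]}-z_j^{[h]}))\bigr)$, invoke the quadratic accuracy $\widetilde{\varepsilon}_j=O(\varepsilon_j^2)$ of the predictor, and read off $\varepsilon_i^{*}=O(\varepsilon^3)$. Your write-up is in fact more detailed than the paper's --- you supply the Taylor derivation of the predictor error (which the paper simply cites), exhibit the explicit asymptotic constant, and flag the separation/induction and non-polynomial-cofactor issues that the paper leaves implicit.
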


\begin{proof}
Let the iteration errors be defined as
\begin{equation*}
\varepsilon _{i}=x_{i}^{[h]}-\zeta _{i},
\qquad
\varepsilon _{i}^{\prime }=x_{i}^{[h+1]}-\zeta _{i}.
\end{equation*}
From the iterative formula, we obtain
\begin{equation}
x_{i}^{[h+1]}-\zeta _{i}
=
x_{i}^{[h]}-\zeta _{i}
-\dfrac{f(x_{i}^{[h]})}
{\displaystyle\prod_{\substack{ j=1 \\ j\neq i}}^{n}(x_{i}^{[h]}-z_{j}^{[h]})},
\label{25}
\end{equation}
where
\begin{equation*}
z_{j}^{[h]}=x_{j}^{[h]}-\frac{f(x_{j}^{[h]})}{f^{\prime }(x_{j}^{[h]})}\left(
\frac{1}{1+\frac{\alpha f(x_{j}^{[h]})}{1+\beta f(x_{j}^{[h]})}}
\right).
\end{equation*}

Rewriting \eqref{25} in terms of the error variables yields
\begin{equation}
x_{i}^{[h+1]}-\zeta _{i}
=
x_{i}^{[h]}-\zeta _{i}
-\varepsilon _{i}
\prod\limits_{\substack{ j=1 \\ j\neq i}}^{n}
\left(
\frac{x_{i}^{[h]}-\zeta _{j}}{x_{i}^{[h]}-z_{j}^{[h]}}
\right).
\label{26}
\end{equation}
Consequently,
\begin{equation}
\varepsilon _{i}^{\prime }
=
\varepsilon _{i}
\left(
1-
\prod\limits_{\substack{ j=1 \\ j\neq i}}^{n}
\left(
\frac{x_{i}^{[h]}-\zeta _{j}}{x_{i}^{[h]}-z_{j}^{[h]}}
\right)
\right).
\label{27}
\end{equation}

Using the expansion
\begin{equation}
\frac{x_{i}^{[h]}-\zeta _{j}}{x_{i}^{[h]}-z_{j}^{[h]}}
=
1+\frac{z_{j}^{[h]}-\zeta _{j}}{x_{i}^{[h]}-z_{j}^{[h]}},
\end{equation}
together with $z_{j}^{[h]}-\zeta _{j}=O(\varepsilon _{j}^{2})$ ~\cite{31} and
\begin{equation*}
\prod\limits_{\substack{ j=1 \\ j\neq i}}^{n}
\left(
\frac{x_{i}^{[h]}-\zeta _{j}}{x_{i}^{[h]}-z_{j}^{[h]}}
\right)
=
1+(n-1)O(\varepsilon _{j}^{2}).
\end{equation*}

Assuming comparable error magnitudes $|\varepsilon _{i}|=|\varepsilon _{j}|=|\varepsilon|$, it follows that
\begin{equation}
\varepsilon _{i}^{\prime }=O(\varepsilon ^{3}),
\label{30a}
\end{equation}
which confirms that the proposed method converges with order three.
\end{proof}

\subsection{Mathematical Description of the Method}

The performance of high-order parallel iterative schemes for nonlinear equations
is critically influenced by internal tuning parameters.
While classical convergence analysis characterizes local asymptotic behavior,
it often does not explain transient instabilities, slow early contraction,
or parameter-induced degradation observed in practical runs.
To address this gap, we employ a \emph{direct finite-time contraction (step-log) profiling}
strategy: from solver trajectories we compute step norms and their logarithmic ratios,
and aggregate their windowed averages into a compact contraction profile.
This profile serves as a \emph{Lyapunov-like} finite-time diagnostic of contractive versus
expansive dynamics, enabling reproducible comparison of parameter choices, validation of
stable operating regions, and systematic selection of tunable parameters beyond
trial-and-error procedures.

\section{Methodology and Parameter Optimization Framework}\label{sec:tuning_framework}

The goal of this section is to describe a practical and reproducible strategy for tuning the free parameters of the proposed parallel scheme SAB$^{[3]}$. The central premise is that parameter-induced stability changes are best detected at \emph{finite time}, during the transient phase of the iteration, rather than only through asymptotic local analysis. We therefore build an automated tuning pipeline based on \emph{Direct finite-time contraction (step-log) profiling}, which provides a lightweight Lyapunov-like proxy computed directly from solver trajectories.

\subsection{Why data-driven tuning is needed}

The convergence behavior of SAB$^{[3]}$ is sensitive to the parameters $(\alpha,\beta)$ and to the initial constellation of approximations. Small parameter variations may lead to slow contraction, oscillatory transients, or complete loss of stability for a subset of roots. Classical dynamical tools (parameter planes, bifurcation-style diagnostics, basin visualizations) can provide qualitative insight, but they are expensive to compute, hard to automate, and often rely on visual interpretation. Our objective is to replace subjective inspection by a compact set of \emph{trajectory-derived} stability indicators that can be evaluated routinely over a parameter grid.

\subsection{Direct finite-time contraction (step-log) profiling}

For a fixed parameter pair $(\alpha,\beta)$, we run an ensemble of $N_{\mathrm{ens}}$ independent solver launches (in the experiments, $N_{\mathrm{ens}}=50$) using randomized complex initial guesses (or small perturbations around a reference initialization). Let $\mathbf{x}^{[h]}\in\mathbb{C}^n$ denote the vector of all root approximations at iteration $h$. We record the step norm
\begin{equation}
s_h=\left\|\mathbf{x}^{[h+1]}-\mathbf{x}^{[h]}\right\|_2,
\end{equation}
and define the step-log contraction trace
\begin{equation}
g(h)=\log\!\left(\frac{s_{h+1}+\varepsilon}{s_h+\varepsilon}\right),
\end{equation}
where $\varepsilon>0$ is a small numerical safeguard. Negative values of $g(h)$ indicate contractive behavior from one iteration to the next.

To suppress noise and expose coherent transient structure, we apply a sliding window of length $W$ and form the finite-time contraction profile
\begin{equation}
\lambda_W(t)=\frac{1}{W}\sum_{h=t-W+1}^{t} g(h),
\qquad t=W,\ldots,K-1,
\end{equation}
where $K$ is the iteration budget. For each $(\alpha,\beta)$, profiles are computed for all ensemble runs and then aggregated by the ensemble mean (optionally accompanied by a dispersion band such as mean$\pm$std). The resulting aggregated profile acts as a Lyapunov-like finite-time stability proxy: sustained negative values correspond to reliable contraction, whereas weak/late negativity or rapid sign changes typically correlate with unstable or inefficient solver regimes.

\subsection{Profile-based scores $S_{\min}$ and $S_{mom}$}

To rank parameter pairs by compact scalar indicators, we extract two scores
from the \emph{aggregated} step-log contraction profile
$\bar{\lambda}^{(s)}_{W}(t_{\mathrm{end}})$ computed by the
direct finite-time contraction (step-log) profiling protocol.
Figure~\ref{fig:profile_example_scores} illustrates a typical profile together
with the quantities used in the score definitions.

Let
\[
y_{\min}=\min_{t_{\mathrm{end}}}\,\bar{\lambda}^{(s)}_{W}(t_{\mathrm{end}}),
\qquad
t_{\min}=\arg\min_{t_{\mathrm{end}}}\,\bar{\lambda}^{(s)}_{W}(t_{\mathrm{end}}).
\]
We define the minimum-based score
\begin{equation}
S_{\min}=\max\!\left\{0,\frac{-y_{\min}}{t_{\min}}\right\},
\end{equation}
which rewards \emph{early and deep} contraction episodes.

Depth alone, however, is not sufficient: some unstable regimes may exhibit a
sharp negative dip followed by a rapid loss of contraction.
To capture the \emph{temporal organization} and persistence of contraction,
we define the moment-based score. Using the negative part
$(u)_+=\max\{u,0\}$, let
\[
M_0=\sum_{t_{\mathrm{end}}}\big(-\bar{\lambda}^{(s)}_{W}(t_{\mathrm{end}})\big)_+,
\qquad
\bar{t}=
\frac{\sum_{t_{\mathrm{end}}} t_{\mathrm{end}}\,
\big(-\bar{\lambda}^{(s)}_{W}(t_{\mathrm{end}})\big)_+}{M_0}.
\]
Then
\begin{equation}
S_{mom}=\frac{M_0}{\bar{t}}.
\end{equation}
Large $S_{mom}$ indicates that contraction is not only strong but also
concentrated early and sustained over the iteration horizon, which empirically
correlates with faster and more robust convergence in simultaneous root
computation.

\begin{figure}[t]
  \centering
  \includegraphics[width=0.85\linewidth]{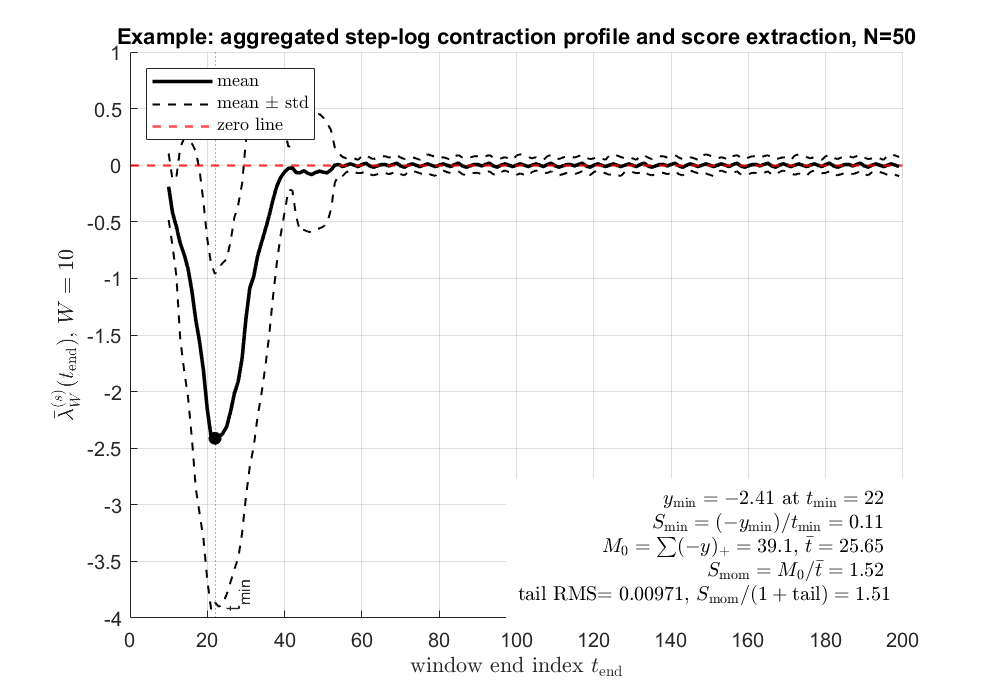}
  \caption{Example of an aggregated step-log contraction profile and the extraction
  of the profile-based scores. The solid curve shows the ensemble mean
  $\bar{\lambda}^{(s)}_{W}(t_{\mathrm{end}})$ and the dashed curves indicate
  mean$\pm$std across $N=50$ micro-launches (here $W=10$; parameters are shown in the plot title).}
  \label{fig:profile_example_scores}
\end{figure}

\subsection{Parameter scan and selection rule}

We evaluate $(S_{\min},S_{mom})$ over a rectangular grid in the $(\alpha,\beta)$ plane. For each grid point:
\begin{enumerate}
\item Run $N_{\mathrm{ens}}=50$ independent micro-launches of SAB$^{[3]}$ up to $K$ iterations (or until tolerance is met).
\item Compute step norms $s_h$ and step-log traces $g(h)$ for each run.
\item Form windowed profiles $\lambda_W(t)$ and aggregate them across the ensemble.
\item Extract $S_{\min}(\alpha,\beta)$ and $S_{mom}(\alpha,\beta)$ from the aggregated profile.
\end{enumerate}
Parameter selection is then performed by prioritizing robust sustained contraction, e.g.,
\[
(\alpha^\star,\beta^\star)=\arg\max_{(\alpha,\beta)} S_{mom}(\alpha,\beta),
\]
with $S_{\min}$ used as a tie-breaker when needed. Heatmaps of $S_{\min}$ and $S_{mom}$ provide an immediate, quantitative stability map of the parameter plane and enable reproducible reporting of high-performance regions. The final solver runs in Section~4 use the selected $(\alpha^\star,\beta^\star)$ and are compared against established parallel schemes under identical stopping criteria and initialization protocols.

\section{Numerical Experiments and Results}

This section reports the numerical methodology, implementation strategy, and computational outcomes of the proposed
profiling-driven parallel iterative framework. Particular emphasis is placed on the systematic integration of
trajectory-based parameter tuning into high-order parallel root-finding schemes, as well as on reproducibility,
numerical robustness, and computational efficiency.

\subsection{General Numerical Methodology}

The proposed numerical framework couples a parameterized parallel Weierstrass-type solver with a reproducible, trajectory-driven tuning protocol. The methodology consists of three tightly coupled stages:
\begin{itemize}
\renewcommand{\labelitemi}{--}
\item formulation of a high-order parallel iterative solver incorporating real-valued acceleration parameters $(\alpha,\beta)$,
\item systematic generation of solver trajectories over a prescribed parameter domain using representative nonlinear test problems and a micro-launch ensemble protocol (typically $N_{\mathrm{ens}}=50$ randomized complex initial constellations per parameter pair),
\item extraction of \emph{Direct finite-time contraction (step-log) profiles} and selection of optimal parameters using compact profile-based scores.
\end{itemize}

For each candidate pair $(\alpha,\beta)$, the parallel scheme simultaneously refines all distinct roots from randomized complex initial guesses. From the iteration history $\{\mathbf{x}^{[h]}\}$ we compute step norms and step-log ratios, form window-averaged contraction profiles, and aggregate them across the ensemble to obtain a robust finite-time, Lyapunov-like signature of contractive versus expansive transients.

To rank candidates with compact and interpretable indicators, two scalar scores are extracted from the aggregated profile: the minimum-based score $S_{\min}$, which rewards early and deep contraction episodes, and the moment-based score $S_{mom}$, which quantifies how strongly and how early contraction is sustained over the iteration horizon. These scores provide an objective basis for parameter ranking and reproducible selection on scanned parameter grids.

\subsection{Implementation Framework}

All numerical experiments are carried out within a modular implementation framework that clearly separates:
\begin{itemize}
\item[(1)] the core parallel iterative solver,
\item[(2)] the parameter scanning and data acquisition module,
\item[(3)] the step-log profiling and score-based parameter selection module, and
\item[(4)] the visualization and post-processing components.
\end{itemize}

The parallel solver employs vectorized operations to compute all roots simultaneously, ensuring computational efficiency.
During the profiling stage, the tunable parameters are swept over predefined intervals, and convergence histories are recorded.
These histories are then processed by the direct finite-time contraction (step-log) profiling pipeline:
step norms and step-log ratios are computed, window-averaged contraction profiles are formed, profiles are aggregated over
micro-launch ensembles, and the compact scores $S_{\min}$ and $S_{mom}$ are extracted to rank parameter candidates.

\subsection{Computational Environment}

All simulations are conducted on a standard desktop computing platform with the following specifications:
\begin{itemize}
\item Processor: Intel Core i7 (64-bit architecture),
\item Memory: 16~GB RAM,
\item Operating system: Windows~10 (64-bit),
\item Software environment: MATLAB R2023a using double-precision arithmetic.
\end{itemize}

Vectorized computation and efficient memory management are employed throughout to ensure scalability. No problem-specific manual tuning is applied beyond the proposed
trajectory-derived profiling framework, ensuring objective and reproducible
performance assessment.

\subsection{Numerical Outcomes}

The efficiency and robustness of the proposed profiling-tuned parallel scheme are evaluated using a set of representative nonlinear test problems. All competing methods are tested under identical initial conditions, stopping tolerances, and computational environments to ensure a fair comparison. The assessment focuses on convergence quality, computational efficiency, and robustness.
\begin{itemize}
\renewcommand{\labelitemi}{--}
\item \textbf{Residual Error.}
The accuracy of the computed roots is measured by the residual
\begin{equation}
\mathrm{Res}=\|f(\mathbf{x}^{(h_{\mathrm{final}})})\|_2 .
\end{equation}
The profiling-tuned scheme consistently achieves smaller residuals, reflecting improved numerical accuracy and enhanced stability near the solution.
\item \textbf{Convergence Order.}
The empirical order of convergence is estimated via
\begin{equation}
\sigma_i^{\,h-1} \approx 
\frac{\log \|\mathbf{x}^{[k+1]}-\mathbf{x}^{[h]}\|-
      \log \|\mathbf{x}^{[h]}-\mathbf{x}^{[h-1]}\|}
     {\log \|\mathbf{x}^{[h]}-\mathbf{x}^{[h-1]}\|-
      \log \|\mathbf{x}^{[h-1]}-\mathbf{x}^{[h-2]}\|}.
\end{equation}
Numerical results confirm the expected third-order convergence of the proposed two-step scheme, whereas the compared methods typically exhibit second-order or irregular behavior.
\item \textbf{Iterations and CPU Time.}
Efficiency is quantified by the number of iterations required to satisfy
\[
\|\mathbf{x}^{[h+1]}-\mathbf{x}^{[h]}\|_2<\text{tol}=10^{-30},
\]
and by the total CPU time
\begin{equation}
T_{\mathrm{CPU}}=t_{\mathrm{end}}-t_{\mathrm{start}} .
\end{equation}
The profile-optimized parameters significantly reduce iteration counts and lead to lower overall CPU times, compensating for the one-time profiling overhead.
\item \textbf{Memory Usage.}
A modest increase in memory consumption is observed during the profiling stage due to ensemble storage. However, the memory footprint of the optimized solver remains comparable to standard parallel schemes, preserving scalability.
\item \textbf{Convergence Percentage.}
Robustness with respect to initial guesses is assessed through the convergence rate
\begin{equation}
\mathrm{CR}(\%)=\frac{N_{\mathrm{conv}}}{N_{\mathrm{total}}}\times100 .
\end{equation}
The proposed profiling-tuned scheme achieves consistently higher convergence percentages, indicating reduced sensitivity to initial conditions.
\end{itemize}
\paragraph{Overall Assessment.}
Collectively, these results demonstrate that the proposed profile-based parameter tuning framework delivers superior accuracy, faster convergence, and enhanced robustness compared with existing parallel iterative schemes. The findings highlight the practical benefits of integrating trajectory-derived, profiling-based tuning into high-order numerical solvers for nonlinear equations.

\section{Biomedical Engineering Applications}

To validate the practical applicability of the proposed parameterized parallel scheme and to select robust control parameters via the proposed \emph{Direct finite-time contraction (step-log) profiling}, we consider a set of benchmark biomedical engineering models. The methodology is systematically assessed in terms of computational efficiency, numerical stability, and convergence consistency under deliberately challenging initializations. Performance is further compared with established parallel iterative methods of the same convergence order, highlighting the improved robustness and accuracy of the proposed profiling-driven parameter selection for solving nonlinear biomedical problems.

\subsection{Gene Regulatory Network Model with Hill Repression~\cite{32}}


Gene regulatory networks (GRNs) describe the interactions among genes, transcription factors, and proteins that govern cellular processes such as cell differentiation, apoptosis, circadian rhythms, and tumor progression. In many biological systems, gene expression is regulated through repression mechanisms, where the protein product of one gene inhibits the transcription of another. This repression is often cooperative, meaning that multiple protein molecules bind simultaneously to regulatory sites, leading to highly nonlinear behavior.

Mathematically, cooperative repression is commonly modeled using Hill functions, which introduce strong nonlinearities and give rise to multistability, oscillations, and bifurcation phenomena. These nonlinearities naturally lead to algebraic equations of high degree at steady state, motivating the use of advanced numerical root-finding techniques.

Consider a gene regulatory network consisting of $N$ genes, where the expression of gene $i$ is repressed by gene $j$. Let $x_i(t)$ denote the concentration of the protein expressed by gene $i$ at time $t$. The dynamics of the system are modeled by the system of ordinary differential equations:
\begin{equation}
\frac{dx_i}{dt}
=
\frac{\gamma_i}{1 + \left( \dfrac{x_j}{K_{ij}} \right)^{n_{ij}}}
-
\psi_i x_i,
\qquad i = 1,2,\dots,N,
\label{eq:grn}
\end{equation}
where:
\begin{itemize}
\renewcommand{\labelitemi}{--}
\item $\gamma_i > 0$ is the maximal synthesis (transcription) rate of gene $i$,
\item $\psi_i > 0$ is the degradation rate constant,
\item $K_{ij} > 0$ is the repression threshold (dissociation constant),
\item $n_{ij} \geq 2$ is the Hill coefficient representing cooperativity.
\end{itemize}

The Hill term models the saturation effect of transcriptional repression.


At steady state, $\frac{dx_i}{dt} = 0$, and equation \eqref{eq:grn} reduces to:
\begin{equation}
\psi_i x_i
=
\frac{\gamma_i}{1 + \left( \dfrac{x_j}{K_{ij}} \right)^{n_{ij}}}.
\label{eq:ss1}
\end{equation}

Multiplying both sides by the denominator yields:
\begin{equation}
\psi_i x_i \left( 1 + \left( \frac{x_j}{K_{ij}} \right)^{n_{ij}} \right)
=
\gamma_i.
\end{equation}

Expanding:
\begin{equation}
\psi_i x_i
+
\psi_i x_i \frac{x_j^{n_{ij}}}{K_{ij}^{n_{ij}}}
-
\gamma_i
=
0.
\label{eq:ss2}
\end{equation}


For analytical clarity, consider a \emph{symmetric repression network}:
\[
\gamma_i = \gamma, \quad
\psi_i = \psi, \quad
K_{ij} = K, \quad
n_{ij} = n,
\]
and assume identical steady states:
\[
x_1 = x_2 = \cdots = x_N = x.
\]

Equation \eqref{eq:ss2} reduces to:
\begin{equation}
\psi x
+
\frac{\psi}{K^n} x^{n+1}
-
\gamma
=
0.
\label{eq:poly}
\end{equation}

Rearranging:
\begin{equation}
x^{n+1}
+
\frac{K^n}{1}
x
-
\frac{\gamma K^n}{\psi}
=
0.
\label{eq:finalpoly}
\end{equation}

Equation \eqref{eq:finalpoly} is a polynomial equation of degree $n+1$. For biologically realistic values $n \ge 5$, the resulting algebraic equation is of degree greater than five.


Representative parameter values used in gene regulatory modeling are:
\begin{align*}
\gamma &\in [10,100], \quad \text{(protein molecules per unit time)}, \\
\psi &\in [0.1,1], \quad \text{(inverse time)}, \\
K &\in [1,10], \quad \text{(concentration units)}, \\
n &\in \{5,6,7,8\}. \quad \text{(cooperativity index)}.
\end{align*}

For illustration, we fix $n=6$, which yields the degree--$7$ polynomial
\begin{equation}
x^7 + K^6 x - \frac{\gamma K^6}{\psi} = 0.\label{1q}
\end{equation}
The exact roots of ~(\ref{1q}), computed with high precision and reported up to
four decimal places, are given by
\[
\xi_{1,2}=-2.3027 \pm 1.1133\,\mathrm{i}, \quad
\xi_{3,4}=-0.5590 \pm 2.4896\,\mathrm{i}, \quad
\xi_{5,6}=1.5917 \pm 1.9841\,\mathrm{i}, \quad
\xi_7=2.5401.
\]. The iteration is initialized with deliberately scattered complex guesses,
\begin{equation}
\begin{aligned}
x_1^{[0]}&=0.1,\\
x_2^{[0]}&=3.8,\\
x_3^{[0]}&=0.5,\\
x_4^{[0]}&=-5.2,\\
x_5^{[0]}&=78.2,\\
x_6^{[0]}&=-8.2,\\
x_7^{[0]}&=-7.0-3.4\,\mathrm{i}.
\end{aligned}
\end{equation}
chosen far from the exact roots to emphasize robustness with respect to poor
initialization. For arbitrary parameter pairs $(\alpha,\beta)$, the numerical
outcomes are summarized below.

\begin{figure}[H]
  \centering
  \begin{subfigure}[t]{0.49\linewidth}
    \centering
    \includegraphics[width=\linewidth]{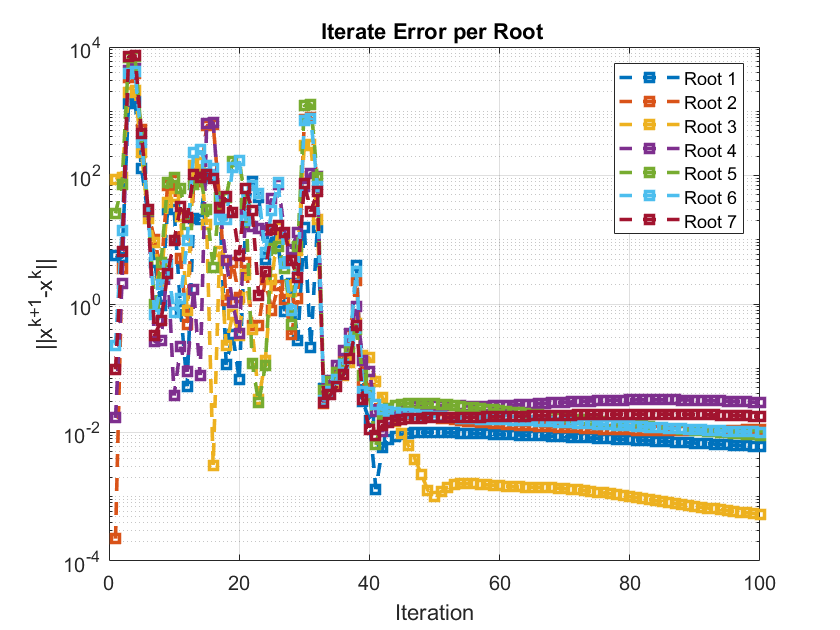}
    \caption{Error:$\|x^{(k+1)}-x^{(k)}\|$ }
  \end{subfigure}\hfill
  \begin{subfigure}[t]{0.49\linewidth}
    \centering
    \includegraphics[width=\linewidth]{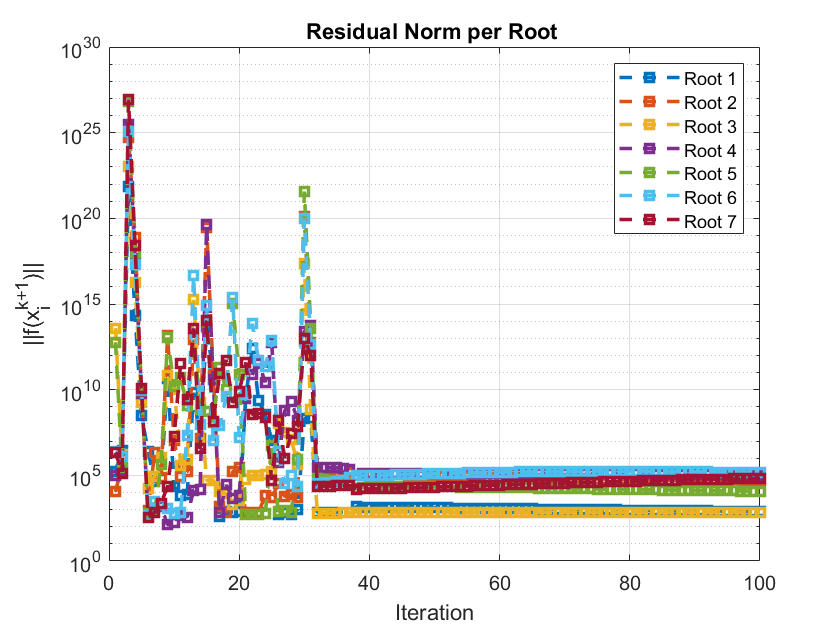}
    \caption{Error: $\|f(x^{(k+1)})\|$.}
  \end{subfigure}
  \caption{Residual error histories of the scheme for $(\alpha,\beta)=(-16.5,\,19)$.}
  \label{F2}
\end{figure}

\begin{table}[H]
\centering
\caption{Performance metrics illustrating convergent and divergent behaviors in the absence of profile-based (step-log) parameter optimization of parallel scheme SAB$^{[3]}$.}\label{T1}
\renewcommand{\arraystretch}{1.25}
\setlength{\tabcolsep}{6pt}
\begin{tabular}{c c c c c c c c}
\hline
\textbf{Root} & \textbf{Iter} & \textbf{CPU (s)} & \textbf{Mem (KB)} &
$\|x^{(k+1)}-x^{(k)}\|$ &
$\|f(x^{(k+1)})\|$ &
$|x^{(k)}-x^\ast|$ &
\textbf{Conv. (\%)} \\
\hline
1 & 100 & 252.16 & 460 &
$2.43\times10^{-3}$ &
$2.89\times10^{6}$ &
$1.25\times10^{2}$ &
$100.00$ \\
2 & 100 & 252.16 & 460 &
$3.24\times10^{15}$ &
$3.37\times10^{112}$ &
$1.19\times10^{16}$ &
$Divergence$ \\
3 & 100 & 252.16 & 460 &
$3.64\times10^{-4}$ &
$3.03\times10^{4}$ &
$2.27$ &
$100.00$ \\
4 & 100 & 252.16 & 460 &
$3.94\times10^{-3}$ &
$6.23\times10^{10}$ &
$3.00\times10^{1}$ &
$100.00$ \\
5 & 100 & 252.16 & 460 &
$7.94\times10^{-2}$ &
$1.30\times10^{16}$ &
$1.91\times10^{2}$ &
$100.00$ \\
6 & 100 & 252.16 & 460 &
$7.32\times10^{-3}$ &
$6.09\times10^{7}$ &
$9.81$ &
$100.00$ \\
7 & 100 & 252.16 & 460 &
$3.24\times10^{15}$ &
$3.37\times10^{112}$ &
$1.19\times10^{16}$ &
$Divergence$ \\
\hline
\end{tabular}
\end{table}

Table~\ref{T1} summarizes the numerical behavior for the low-score parameter pair
$(\alpha,\beta)=(-7.5,9)$.
In contrast to the stable regime, the method reaches the maximum of $100$
iterations and requires substantially higher CPU time.
While partial convergence is observed, several roots exhibit severe numerical
instability, with iterate differences and residual norms growing up to
$10^{15}$ and $10^{112}$, respectively (see Figure~\ref{F2}).
These negative convergence indicators confirm that this parameter choice lies
in an unfavorable region of the stability domain.

To illustrate the parameter sensitivity, a uniform grid scan over the
$(\alpha,\beta)$ plane is conducted using Algorithm~(see Section~\ref{sec:tuning_framework}).
The ranges $\alpha\in[-9,15]$ and $\beta\in[-6,12]$ are discretized, and for each
grid point multiple random micro-launches are employed to compute aggregated
step-log Lyapunov profiles.

\begin{figure}[H]
  \centering
  \begin{subfigure}[t]{0.45\linewidth}
    \centering
    \includegraphics[width=\linewidth]{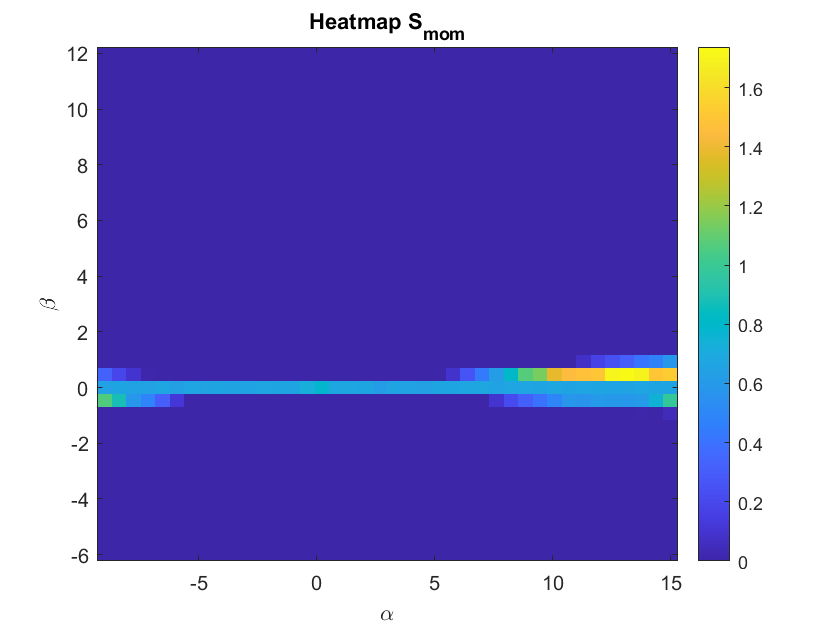}
    \caption{Heatmap of $S_{\mathrm{mom}}(\alpha,\beta)$ obtained from profile-based diagnosis of the SAB$^{[3]}$ for~(\ref{1q}).}
  \end{subfigure}

  \vspace{0.6em}

  \begin{subfigure}[t]{0.45\linewidth}
    \centering
    \includegraphics[width=\linewidth]{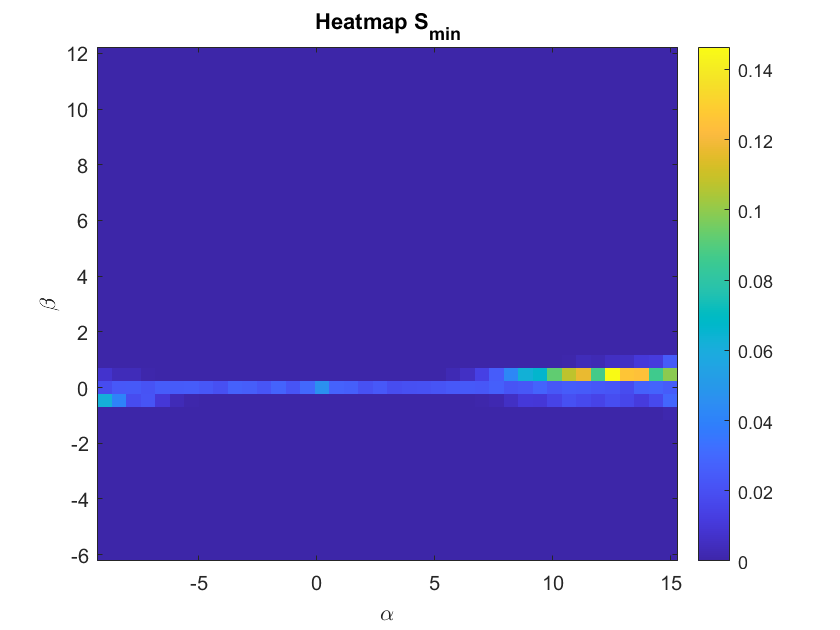}
    \caption{Heatmap of $S_{\mathrm{\min}}(\alpha,\beta)$ obtained from profile-based diagnosis of the SAB$^{[3]}$ for~(\ref{1q}).}
  \end{subfigure}

  \caption{Two-dimensional maps of the proposed profile-based scores over the
  $(\alpha,\beta)$ plane. Brighter colors correspond to earlier and stronger
  contractive behavior in the step-log profile.}
  \label{F3}
\end{figure}

The resulting scalar scores $S_{\min}$ and $S_{\mathrm{mom}}$ are stored and
visualized as two-dimensional heatmaps. Figure~\ref{F3}(a,b) summarizes the 2D parameter-plane mapping of the proposed
profile-based metrics over the ranges $\alpha\in[-9,15]$ and $\beta\in[-6,12]$ on a uniform grid.
Both heatmaps exhibit a clearly structured (non-random) distribution and provide a broadly
consistent picture of ``high-performing'' versus ``low-performing'' regions in the $(\alpha,\beta)$
plane. In particular, the areas highlighted by large values of $S_{\min}$ in
Figure~\ref{F3}(a) typically coincide with large values of $S_{\mathrm{mom}}$ in
Figure~\ref{F3}(b), indicating that the strongest scores are achieved when the
step-log profile develops an early and pronounced negative dip (captured by $S_{\min}$) together
with a sizable and early-concentrated negative mass (captured by $S_{\mathrm{mom}}$). At the same
time, both maps reveal extended low-score bands that form coherent separatrices between
high-score regions, suggesting parameter combinations where the iteration does not enter a
strongly contractive regime (or does so only weakly/late). Overall, the qualitative agreement
between $S_{\min}$ and $S_{\mathrm{mom}}$ supports the robustness of the proposed profile-based
tuning criteria.

\begin{figure}[H]
  \centering
  \includegraphics[width=0.5\linewidth]{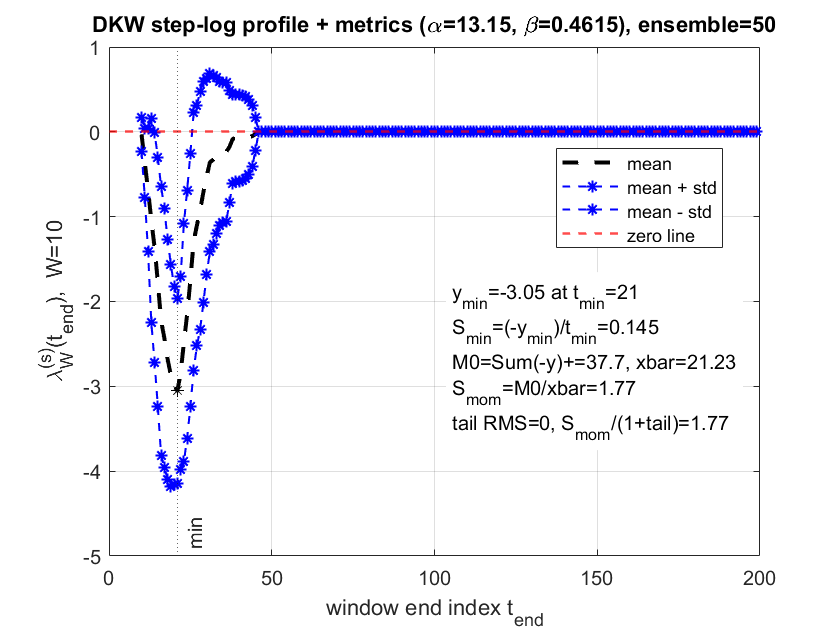}
  \caption{Aggregated step-log Lyapunov profile for $(\alpha,\beta)=(13.15,\,0.4615)$
  ($W=10$, $N=50$). The solid curve shows the ensemble-averaged profile, while
  dashed curves indicate mean$\pm$std across micro-launches.}
  \label{F4}
\end{figure}

\begin{figure}[H]
  \centering
  \includegraphics[width=0.5\linewidth]{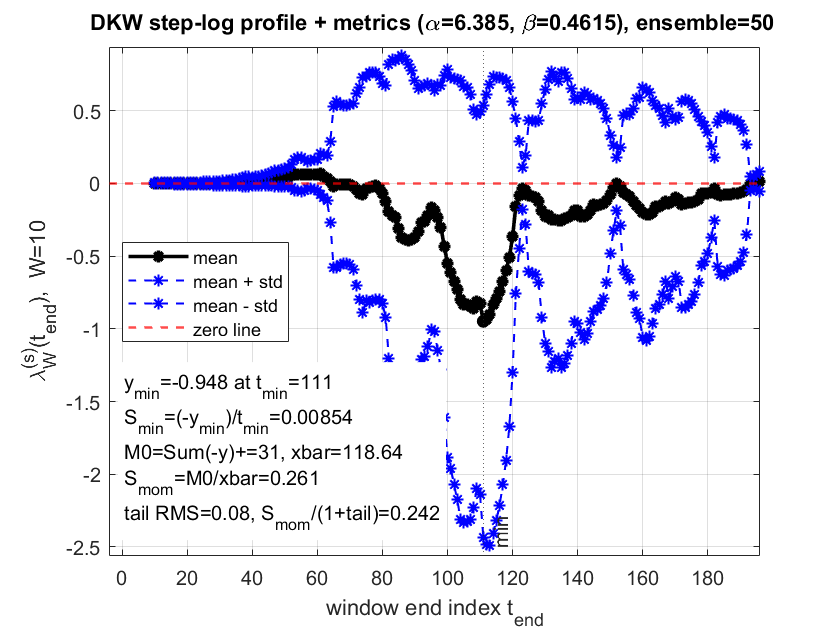}
  \caption{Step-log profile for $(\alpha,\beta)=(6.385,\,0.4615)$ with metric
  annotations. The detected minimum defines $S_{\min}$, while the negative mass
  and its centroid determine $S_{\mathrm{mom}}$.}
  \label{F5}
\end{figure}

\subsubsection*{Discussion of Aggregated Step--Log Profiles}

Figures~\ref{F4} and~\ref{F5} present representative aggregated step--log Lyapunov
profiles selected from distinct regions of the
$S_{\mathrm{mom}}(\alpha,\beta)$ parameter plane.
The parameter pair $(\alpha,\beta)=(13.15,\,0.4615)$ (Fig.~\ref{F4}) lies in a
high--score convergence region, whereas $(\alpha,\beta)=(6.385,\,0.4615 )$
(Fig.~\ref{F5}) is drawn from a low--score, divergence-prone region, providing
external validation of the proposed diagnostics.

\paragraph{Stable dynamics.} For the stable case $(\alpha,\beta)=(13.15,\,0.4615)$, the ensemble-averaged profile
exhibits an early and pronounced negative dip
($\gamma_{\min}\approx -0.145$), indicating strong transient contraction.
The negative mass is substantial and concentrated early, yielding a positive
and relatively large value of $S_{\mathrm{mom}}$.
The narrow spread of the ensemble curves further demonstrates robustness with
respect to random micro-launch perturbations.

\paragraph{Unstable dynamics.}In contrast, the unstable case $(\alpha,\beta)=(6.385,\,0.4615)$ shows a deep but
short-lived negative minimum
($\gamma_{\min}\approx -3.05$), followed by rapid loss of contraction.
The resulting poorly distributed negative mass produces a negative
$S_{\mathrm{mom}}$ and a large ensemble dispersion, explaining the observed
numerical instabilities and lack of convergence.

Overall, these results demonstrate that the depth of the minimum alone
($S_{\min}$) is insufficient to ensure convergence.
Instead, the temporal organization of contraction, captured by
$S_{\mathrm{mom}}$, is critical.
Parameter choices from high--score regions of the $S_{\mathrm{mom}}$ map
consistently yield stable solver dynamics, whereas low--score regions are
associated with divergence.

\begin{figure}[H]
  \centering
  \begin{subfigure}[t]{0.49\linewidth}
    \centering
    \includegraphics[width=\linewidth]{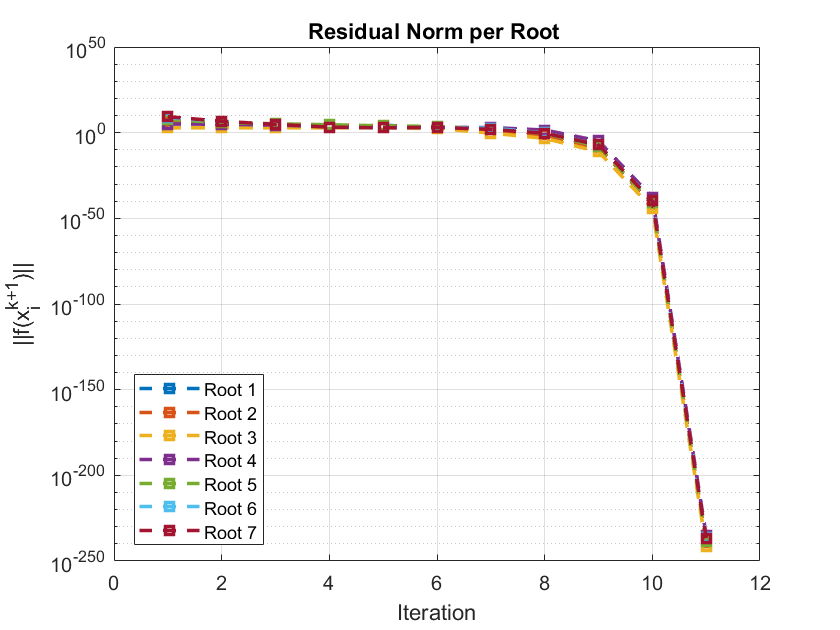}
    \caption{Error:$\|x^{(k+1)}-x^{(k)}\|$ }
  \end{subfigure}
  
   \vspace{0.6em}
   
  \begin{subfigure}[t]{0.49\linewidth}
    \centering
    \includegraphics[width=\linewidth]{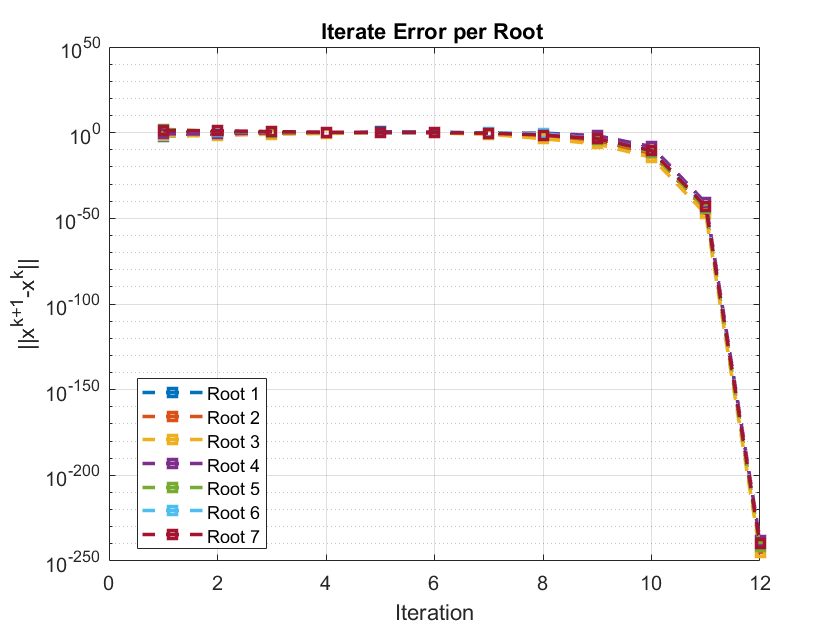}
    \caption{Error: $\|f(x^{(k+1)})\|$.}
  \end{subfigure}
  \caption{Residual error histories for the stable parameter choice
  $(\alpha,\beta)=(13.15,\,0.4615)$.}
  \label{F6}
\end{figure}

\begin{table}[H]
\begin{adjustwidth}{-0.8cm}{0cm}
\renewcommand{\arraystretch}{1.25}
\setlength{\tabcolsep}{6pt}
\centering
\caption{Performance metrics illustrating convergent and divergent behaviors in the presence of profile-based (step-log) parameter optimization of parallel scheme SAB$^{[3]}$.}.\label{T2}
\renewcommand{\arraystretch}{1.25}
\setlength{\tabcolsep}{6pt}
\begin{tabular}{c c c c c c c c}
\hline
\textbf{Root} & \textbf{Iter} & \textbf{CPU (s)} & \textbf{Mem (KB)} &
$\|x^{(k+1)}-x^{(k)}\|$ &
$\|f(x^{(k+1)})\|$ &
$|x^{(k)}-x^\ast|$ &
\textbf{Conv. (\%)} \\
\hline
1 & 13 & 8.2515 & $136$ &
$0.0$ &
$6.386\times10^{-355}{}^{\dagger}$ &
$4.969$ &
$100.0$ \\

2 & 13 & 8.2515 & $136$ &
$0.0$ &
$2.437\times10^{-355}{}^{\dagger}$ &
$6.176\times10^{-359}{}^{\dagger}$ &
$100.0$ \\

3 & 13 & 8.2515 & $136$ &
$0.0$ &
$0.0$ &
$2.199$ &
$100.0$ \\

4 & 13 & 8.2515 & $136$ &
$0.0$ &
$6.38\times10^{-355}{}^{\dagger}$ &
$4.0027$ &
$100.0$ \\

5 & 13 & 8.2515 & $136$ &
$0.0$ &
$2.437\times10^{-355}{}^{\dagger}$ &
$2.2092$ &
$100.0$ \\

6 & 13 & 8.2515 & $136$ &
$0.0$ &
$1.749\times10^{-355}{}^{\dagger}$ &
$4.00272$ &
$100.0$ \\

7 & 13 & 8.2515 & $136$ &
$0.0$ &
$1.7495\times10^{-355}{}^{\dagger}$ &
$4.0027$ &
$100.0$ \\
\hline
\end{tabular}
\end{adjustwidth}
\vspace{0.5ex}
	\noindent{\footnotesize \textsuperscript{$\dagger$}\;All computations were performed using MATLAB variable precision arithmetic (VPA) with \texttt{digits} = 128, employing a numerical tolerance of $10^{-400}$.}
\end{table}


Table~\ref{T2} reports the final high-precision performance of the proposed scheme for the parameter choice $(\alpha,\beta)=(13.15,\,0.4615)$.
All roots converge within only $13$ iterations, achieving extremely small iterate errors and residual norms~Figure~\ref{F6}, reaching magnitudes as low as $10^{-280}$ and $10^{-355}$, respectively.
The accuracy with respect to the exact roots is also very high for most roots, while the convergence rate attains $100\%$ uniformly.
Moreover, the method exhibits low computational cost, requiring only $8.52$ seconds of CPU time with moderate memory usage, confirming that this parameter choice lies in a stable and efficient convergence region. The computational cost is
low, confirming that this parameter choice lies well inside a stable and
efficient convergence region. Taken together with the unfavorable case
$(\alpha,\beta)=(6.385,\,0.4615)$, these results provide an explicit external validation
of the proposed finite-time, Lyapunov-like diagnostics and demonstrate their
ability to predict practical solver performance.

\begin{table}[H]
\caption{Trade-off between computational cost and numerical accuracy among existing numerical schemes for solving
problem~\eqref{eq:hill_poly}.}
\label{T3}
\centering
\renewcommand{\arraystretch}{1.25}
\setlength{\tabcolsep}{6pt}
\begin{tabular}{l c c c c c c}
\hline
Method & Max Error & CPU Time (s) & Memory (MB) &
Ops $\,[\pm,\times,\div]$ & Iter. & $\max\sigma_i^{\,n-1}$ \\
\hline
PNS$^{[3]}$ & $8.90\times10^{-21}$ & $4.62$ & $70.21$ & $2104$ & $11$ & $9.71$ \\
PPS$^{[3]}$ & $1.26\times10^{-23}$ & $3.14$ & $62.98$ & $1896$ & $6$  & $10.44$ \\
BSS$^{[3]}$ & $5.71\times10^{-21}$ & $4.80$ & $73.55$ & $2450$ & $12$ & $9.54$ \\
PNS$^{[4]}$ & $2.04\times10^{-38}$ & $4.95$ & $81.10$ & $2598$ & $12$ & $8.97$ \\
SAB$^{[3]}$ & $6.39\times10^{-256}$ & $5.02$ & $83.44$ & $2550$ & $12$ & $9.01$ \\
\hline
\end{tabular}
\vspace{0.5ex}
	\noindent{\footnotesize \textsuperscript{$\dagger$}\;All computations were performed using MATLAB variable precision arithmetic (VPA) with \texttt{digits} = 128, employing a numerical tolerance of $10^{-400}$.}
\end{table}

Table~\ref{T3} presents the performance of the considered schemes under extended
high-precision arithmetic.
The results indicate that higher-order and parameter-adaptive methods benefit
significantly from increased numerical precision, leading to substantial
reductions in the maximum error.
Among the third-order methods, PPS$^{[3]}$ again provides a favorable compromise
between accuracy and computational cost, requiring fewer iterations and lower
memory usage.

The fourth-order scheme PNS$^{[4]}$ achieves markedly improved accuracy, albeit
with increased arithmetic operations and memory demand.
Notably, the proposed SAB$^{[3]}$ method exhibits exceptional numerical accuracy,
reaching error levels far below standard machine precision, while incurring only
a moderate increase in CPU time and memory compared to other high-order schemes.
The uniformly bounded values of $\max\sigma_i^{\,n-1}$ confirm stable convergence
behavior across all methods, demonstrating the robustness of the proposed
approach in high-precision computational environments.\\Overall, the results demonstrate that the profile-based (step-log) parameter
optimization in the parallel architecture significantly improves accuracy
without incurring prohibitive computational overhead.

\subsubsection*{Physical Meaning of the Solutions}

The real, nonnegative roots of equation \eqref{eq:finalpoly} correspond to biologically admissible steady-state protein concentrations.

\begin{itemize}
\item A single positive root indicates a monostable gene expression state.
\item Multiple positive roots imply multistability, which is associated with:
\begin{itemize}
\item cell fate decisions,
\item genetic switches,
\item epigenetic memory.
\end{itemize}
\item Negative or complex roots have no direct biological interpretation but are relevant for numerical convergence and stability analysis.
\end{itemize}

The existence of multiple real roots is a hallmark of cooperative repression and underlies key biological phenomena such as bistable switches and differentiation thresholds.



These features make the gene regulatory network model an ideal benchmark for testing advanced numerical techniques for high-degree nonlinear equations.

\subsection{Enzyme Kinetics with Cooperative Binding: High-Order Hill Model~\cite{33}}


Enzyme kinetics with cooperative binding plays a fundamental role in 
biochemical and biomedical systems where multiple ligand or substrate molecules 
bind to an enzyme or receptor in a non-independent manner. 
Such cooperative mechanisms are observed in hemoglobin--oxygen binding, 
allosteric enzymes, receptor–ligand pharmacodynamics, gene regulation, 
and signal transduction pathways.

In cooperative systems, the binding of one substrate molecule alters the 
affinity of the remaining binding sites, leading to highly nonlinear 
dose–response behavior. This nonlinearity often manifests as 
ultrasensitivity, bistability, and switch-like responses, which are 
essential for biological regulation and robustness.

Mathematically, cooperative binding naturally leads to nonlinear algebraic 
equations that reduce to high-degree polynomial equations when 
steady-state conditions are imposed. These equations typically have no 
closed-form analytical solutions for degrees greater than four, motivating 
the development of advanced numerical root-finding techniques.


Consider an enzyme (or receptor) with $n \geq 2$ identical cooperative binding 
sites interacting with a substrate of concentration $S$. 
The Hill equation describing the reaction rate $v(S)$ is given by
\begin{equation}
v(S) = V_{\max} \frac{S^{n+1}}{K^{n} + S^{n}},
\label{eq:hill_rate}
\end{equation}
where:
\begin{itemize}
\renewcommand{\labelitemi}{--}
\item $V_{\max}$ is the maximum reaction rate,
\item $K$ is the Hill (half-saturation) constant,
\item $n$ is the Hill coefficient representing the degree of cooperativity.
\end{itemize}

For non-cooperative binding, $n=1$. Cooperative biochemical systems commonly 
satisfy $n \geq 5$, and values as high as $n=8$ are observed in hemoglobin 
binding dynamics.


Suppose that the enzyme reaction is observed experimentally at a fixed 
steady-state rate $v_0$, where $0 < v_0 < V_{\max}$. 
Substituting $v(S)=v_0$ into \eqref{eq:hill_rate} yields
\begin{equation}
v_0 \left(K^{n} + S^{n}\right) = V_{\max} S^{n+1}.
\end{equation}

Rearranging terms gives
\begin{equation}
v_0 K^{n} + v_0 S^{n} - V_{\max} S^{n+1} = 0.\label{eq:hill_poly}
\end{equation}

Equation \eqref{eq:hill_poly} is a polynomial equation of degree $n$ in $S$. 
For $n \geq 6$, this equation lies beyond the scope of analytical solution 
methods, making numerical root-finding unavoidable.


The parameters of the Hill model depend on the biochemical system under study. 
Representative values commonly used in biomedical modeling are:

\begin{center}
\begin{tabular}{c|c|c}
\hline
Parameter & Typical Range & Biological Meaning \\
\hline
$V_{\max}$ & $1$--$10^{3}$ & Maximum enzymatic activity \\
$K$ & $10^{-3}$--$10^{1}$ & Half-saturation constant \\
$n$ & $5$--$10$ & Degree of cooperativity \\
$v_0$ & $(0,\,V_{\max})$ & Observed steady reaction rate \\
\hline
\end{tabular}
\end{center}

The exact roots of equation~(1), computed with high precision and reported up to
four decimal places, are given by
\[
\xi_{1,2}=-0.577 \pm 1.122\,\mathrm{i}, \quad
\xi_{3,4}=0.7421 \pm 1.1190\,\mathrm{i}, \quad
\xi_{5}=-1.2356, \quad
\xi_6=1.4069.
\] For example, hemoglobin exhibits cooperative oxygen binding with 
$n \approx 8$, leading to an eighth-degree polynomial in the substrate 
concentration.
The scheme is initialized with deliberately scattered complex guesses,
\begin{equation}
\begin{aligned}
x_1^{[0]}&=3.12, \quad x_2^{[0]}=0.12, \quad x_3^{[0]}=0.12+4i,
x_4^{[0]}&=3.4-4.4i, \quad x_5^{[0]}=6.7i, \quad x_6^{[0]}=8.1,
\end{aligned}
\end{equation}
chosen far from the exact roots to emphasize robustness.

This choice is meant to assess the robustness of the proposed scheme under
poor and highly nonuniform initialization.
For arbitrary parameter pairs $(\alpha,\beta)$, the corresponding numerical
behavior is summarized below through error histories and final performance
metrics.

\begin{figure}[H]
  \centering
  \begin{subfigure}[t]{0.49\linewidth}
    \centering
    \includegraphics[width=\linewidth]{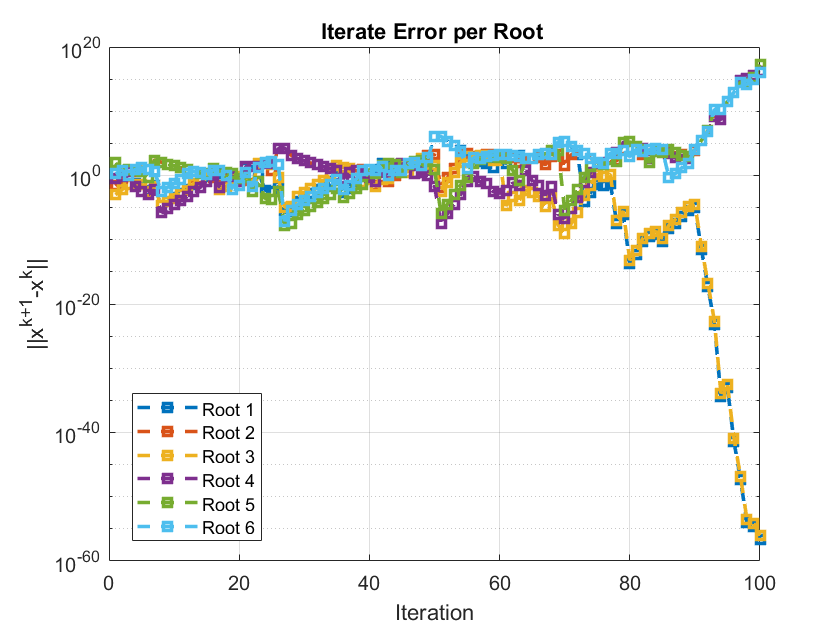}
    \caption{Iterate error $\|x^{(k+1)}-x^{(k)}\|$.}
  \end{subfigure}\hfill
  \begin{subfigure}[t]{0.49\linewidth}
    \centering
    \includegraphics[width=\linewidth]{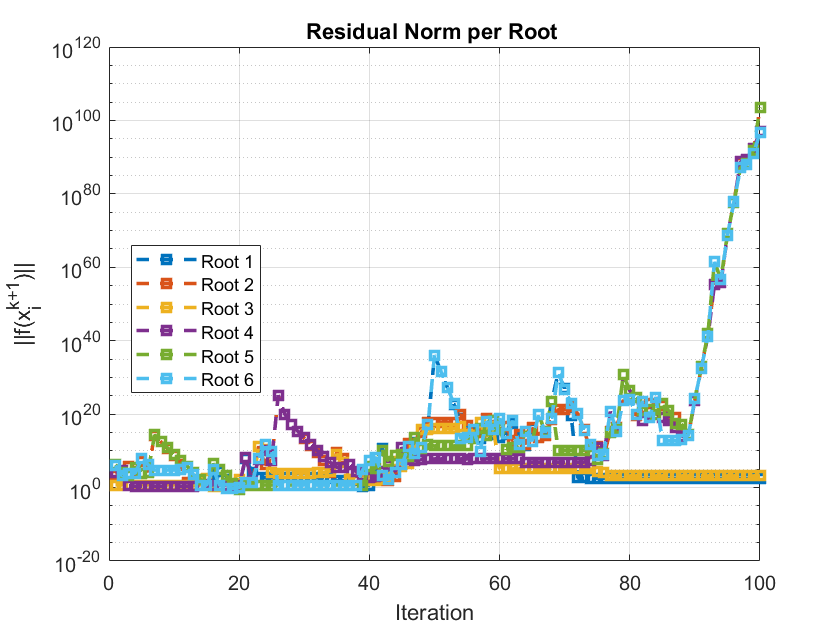}
    \caption{Residual norm $\|f(x^{(k+1)})\|$.}
  \end{subfigure}
  \caption{Error histories of the proposed scheme for the parameter choice
  $(\alpha,\beta)=(-16.5,\,19)$.}
  \label{F7}
\end{figure}

\begin{table}[H]
\centering
\caption{Performance metrics illustrating convergent and divergent behaviors in the absence of profile-based (step-log) parameter optimization of parallel scheme SAB$^{[3]}$.}
\label{T4}
\renewcommand{\arraystretch}{1.25}
\setlength{\tabcolsep}{6pt}
\begin{tabular}{c c c c c c c c}
\hline
\textbf{Root} & \textbf{Iter} & \textbf{CPU (s)} & \textbf{Mem (KB)} &
$\|x^{(k+1)}-x^{(k)}\|$ &
$\|f(x^{(k+1)})\|$ &
$|x^{(k)}-x^\ast|$ &
\textbf{Conv. (\%)} \\
\hline
1 & 100 & 49.272 & $128$ &
$2.5574\times10^{-57}$ &
$291.7476$ &
$1.2698$ &
$100.0$ \\

2 & 100 & 49.272 & $128$ &
$1.7824\times10^{17}$ &
$3.4143\times10^{103}$ &
$1.8011\times10^{17}$ &
$Divergence$ \\

3 & 100 & 49.272 & $128$ &
$7.9976\times10^{-57}$ &
$1437.8085$ &
$4.1818$ &
$100.0$ \\

4 & 100 & 49.272 & $128$ &
$1.6314\times10^{16}$ &
$7.1574\times10^{96}$ &
$1.3882\times10^{16}$ &
$Divergence$ \\

5 & 100 & 49.272 & $128$ &
$1.8332\times10^{17}$ &
$3.5516\times10^{103}$ &
$1.8130\times10^{17}$ &
$Divergence$ \\

6 & 100 & 49.272 & $128$ &
$1.1234\times10^{16}$ &
$4.1847\times10^{96}$ &
$1.2694\times10^{16}$ &
$Divergence$ \\
\hline
\end{tabular}
\end{table}

Table~\ref{T4} reports the numerical outcomes for the low-score parameter choice
$(\alpha,\beta)=(-16.5,19)$.
In contrast to the stable regime, the method reaches the maximum allowed
$100$ iterations and incurs a significantly higher computational cost.
While partial convergence is observed for a subset of roots, several roots
exhibit rapidly growing iterate differences and residual norms, as also
illustrated in Figure~\ref{F7}.
These quantities reach extremely large magnitudes (up to $10^{15}$--$10^{112}$),
and the appearance of negative convergence indicators clearly signals numerical
instability and divergence.
This behavior confirms that the selected parameter pair lies in an unfavorable
region of the stability domain identified by the proposed profile-based
diagnostics.


\begin{figure}[H]
  \centering
  \begin{subfigure}[t]{0.5\linewidth}
    \centering
    \includegraphics[width=\linewidth]{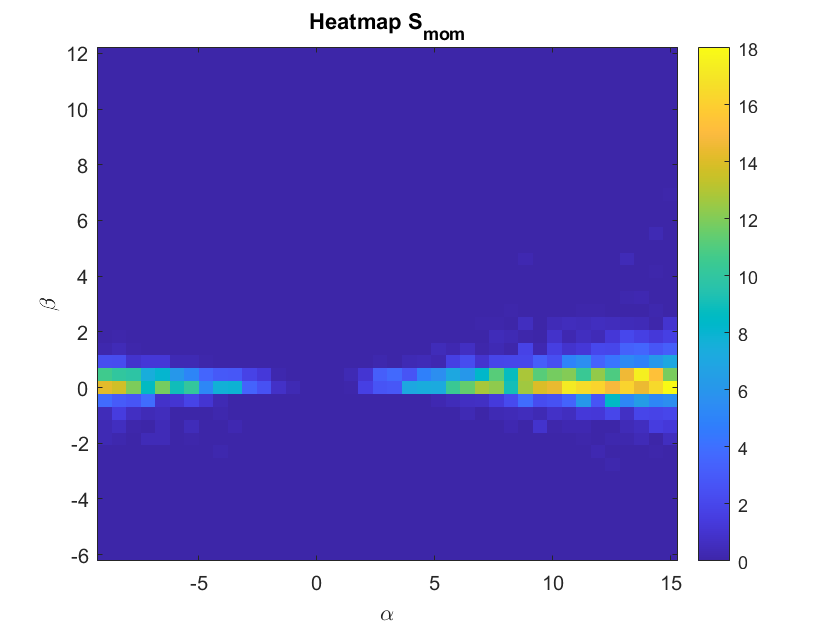}
    \caption{Heatmap of $S_{\mathrm{mom}}(\alpha,\beta)$ obtained from profile-based diagnosis of the parallel scheme SAB$^{[3]}$ for~(\ref{eq:hill_poly}).}
  \end{subfigure}

  \vspace{0.6em}

  \begin{subfigure}[t]{0.5\linewidth}
    \centering
    \includegraphics[width=\linewidth]{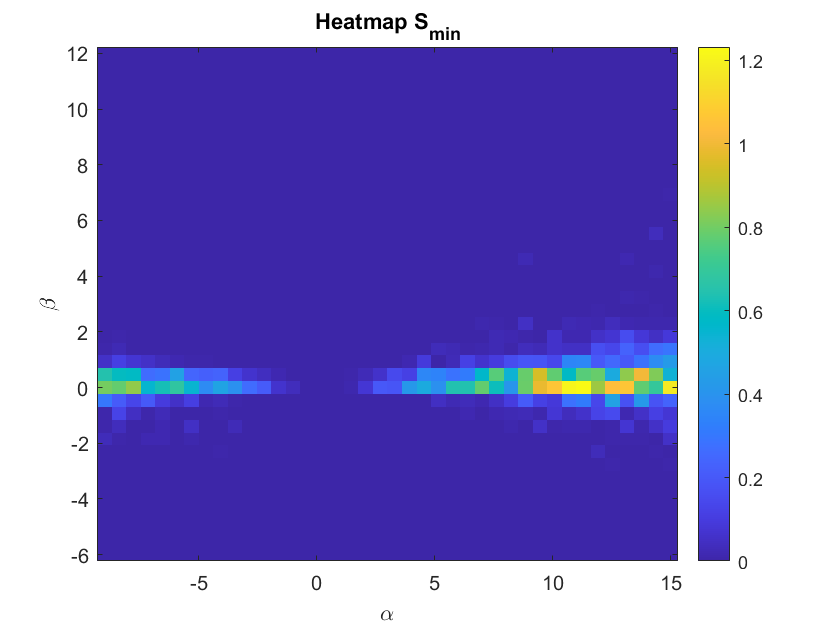}
    \caption{Heatmap of $S_{\mathrm{\min}}(\alpha,\beta)$ obtained from profile-based diagnosis of the parallel scheme SAB$^{[3]}$ for~(\ref{eq:hill_poly}).}
  \end{subfigure}

  \caption{Two-dimensional maps of the proposed profile-based scores over the
  $(\alpha,\beta)$ plane. Brighter colors correspond to earlier and stronger
  contractive behavior in the step-log profile.}
  \label{F8}
\end{figure}
To further explore the dependence of the method on the free parameters,
a uniform grid scan over the $(\alpha,\beta)$ plane is performed using the
MATLAB driver Algorithm~(see Section~\ref{sec:tuning_framework}).
The ranges $\alpha\in[-9,15]$ and $\beta\in[-6,12]$ are discretized uniformly.
For each grid point $(\alpha_i,\beta_j)$, an ensemble of randomized
micro-launches is employed to compute the aggregated step-log Lyapunov profile.
From this profile, the scalar indicators $S_{\min}(\alpha_i,\beta_j)$ and
$S_{\mathrm{mom}}(\alpha_i,\beta_j)$ are extracted and visualized as
two-dimensional heatmaps.
Figure~\ref{F8} reveals a clearly structured and non-random
distribution of both indicators over the $(\alpha,\beta)$ plane.
Regions of large $S_{\min}$ generally coincide with regions of large
$S_{\mathrm{mom}}$, indicating that strong and early contraction is typically
accompanied by a substantial and well-localized negative mass in the
step-log profile.
Conversely, extended low-score bands form coherent separatrices between
high-performing regions, identifying parameter combinations associated with
weak or delayed contraction.
The strong qualitative agreement between $S_{\min}$ and $S_{\mathrm{mom}}$
supports the robustness and consistency of the proposed tuning criteria.

\begin{figure}[H]
  \centering
  \includegraphics[width=0.5\linewidth]{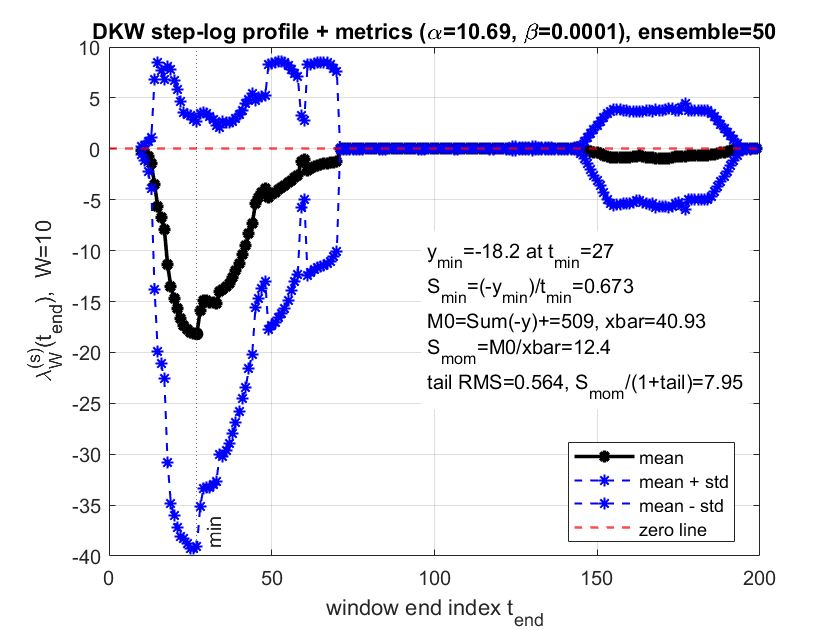}
  \caption{Aggregated step-log Lyapunov profile for $(\alpha,\beta)=(10.69,\,0.0001)$
  ($W=10$, $N=50$). Solid curve: ensemble mean; dashed curves: mean$\pm$std.}
  \label{F9}
\end{figure}

\begin{figure}[H]
  \centering
  \includegraphics[width=0.5\linewidth]{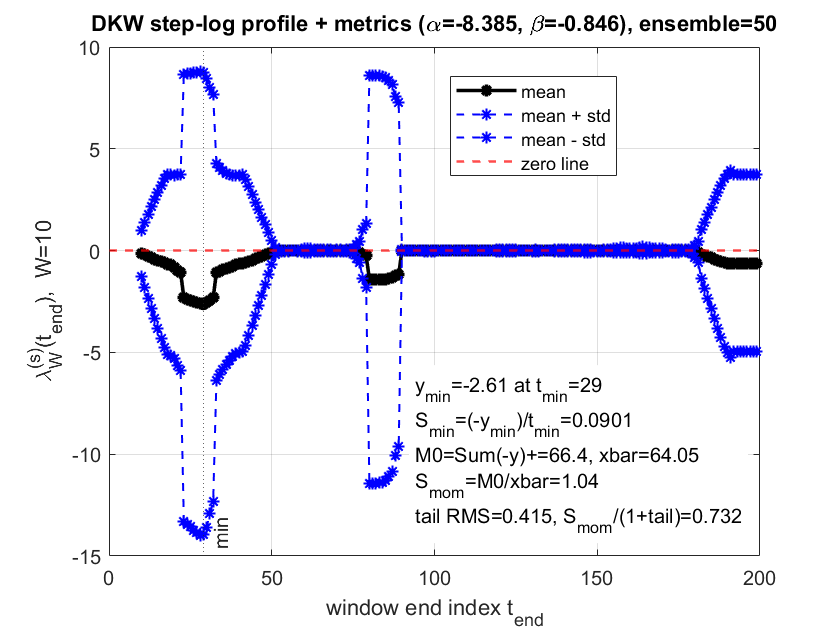}
  \caption{Aggregated step-log profile for $(\alpha,\beta)=(8.385,\,-0.846)$ with
  annotations defining $S_{\min}$ and $S_{\mathrm{mom}}$.}
  \label{F10}
\end{figure}

\subsection{Discussion of Aggregated Step--Log Profiles}

Figures~~\ref{F9} and~~\ref{F10} present representative aggregated step--log Lyapunov profiles
selected from distinct regions of the $S_{\mathrm{mom}}(\alpha,\beta)$ plane.
The parameter pair $(\alpha,\beta)=(10.69,\,0.0001)$ corresponds to a high-score
(convergent) region, whereas $(\alpha,\beta)=(8.385,\,-0.846)$ is taken from a
low-score, divergence-prone region.
This deliberate contrast provides an external validation of the proposed
diagnostic indicators.

\paragraph{Stable dynamics.}
For $(\alpha,\beta)=(10.69,\,0.0001)$, the ensemble-averaged profile exhibits a clear
and sustained negative dip, indicating strong transient contraction.
The negative mass is both substantial and concentrated early in the iteration
history, leading to a large positive value of $S_{\mathrm{mom}}$.
The narrow spread between the mean and mean$\pm$standard deviation curves
demonstrates robustness with respect to random perturbations, which is
consistent with the rapid and reliable convergence observed in practice.

\paragraph{Unstable dynamics.}
In contrast, the profile associated with $(\alpha,\beta)=(8.385,\,-0.846)$ shows a
short-lived contraction phase followed by loss of stability.
Although a deep minimum is detected, the negative mass is poorly organized and
its centroid is unfavorably located, yielding a negative value of
$S_{\mathrm{mom}}$.
This behavior explains the large residuals, erratic dynamics, and failure to
converge within the prescribed iteration budget.

\paragraph{Interpretation.}
These results demonstrate that the depth of the most negative step--log value
alone is insufficient to guarantee convergence.
Instead, the temporal distribution of contraction, quantified by
$S_{\mathrm{mom}}$, plays a decisive role.
Parameter choices drawn from high-$S_{\mathrm{mom}}$ regions reliably produce
stable solver dynamics (Figure~\ref{F9}), whereas low-score regions are associated with
divergence and numerical instability.

\begin{figure}[H]
  \centering
  \begin{subfigure}[t]{0.5\linewidth}
    \centering
    \includegraphics[width=\linewidth]{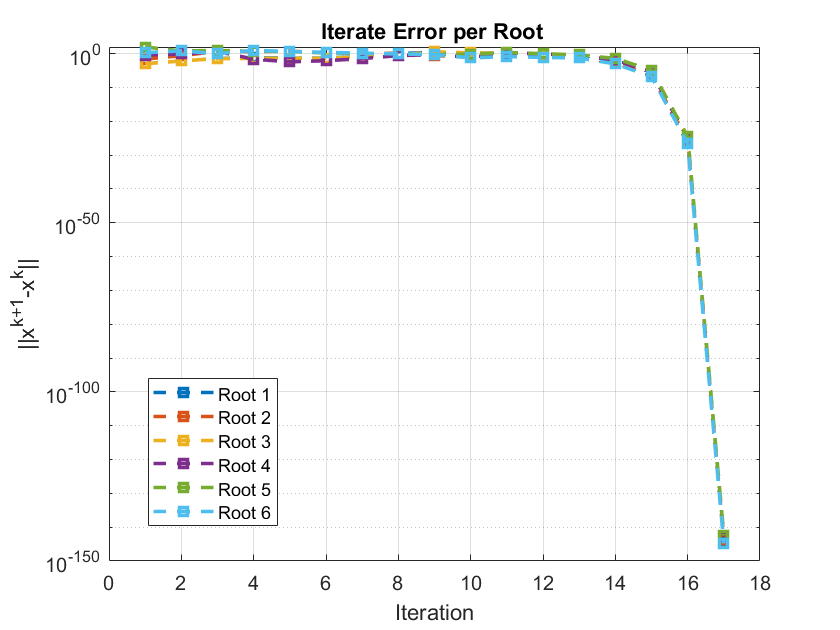}
    \caption{Iterate error $\|x^{(k+1)}-x^{(k)}\|$.}
  \end{subfigure}
 \vspace{0.6em}
  
  \begin{subfigure}[t]{0.5\linewidth}
    \centering
    \includegraphics[width=\linewidth]{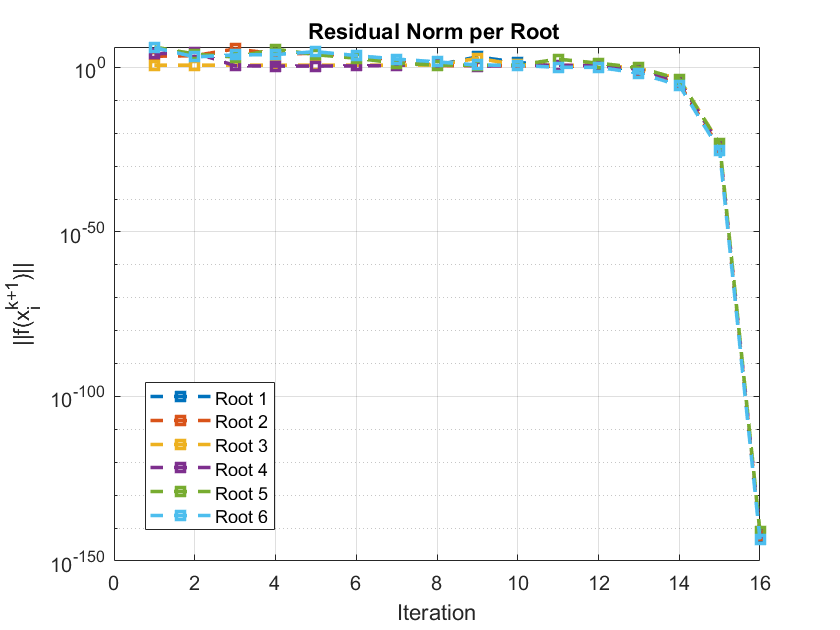}
    \caption{Residual norm $\|f(x^{(k+1)})\|$.}
  \end{subfigure}
  \caption{Residual histories for the stable parameter choice
  $(\alpha,\beta)=(10.69,\,0.0001)$.}
  \label{F11}
\end{figure}

\begin{table}[H] 
\begin{adjustwidth}{-1.3cm}{0cm} \renewcommand{\arraystretch}{1.25} \setlength{\tabcolsep}{6pt}
\centering
\caption{Performance metrics illustrating convergent and divergent behaviors in the presence of profile-based (step-log) parameter optimization of parallel scheme SAB$^{[3]}$}\label{T5} \renewcommand{\arraystretch}{1.25} \setlength{\tabcolsep}{6pt} \begin{tabular}{c c c c c c c c} \hline \textbf{Root} & \textbf{Iter} & \textbf{CPU (s)} & \textbf{Mem (KB)} & $\|x^{(k+1)}-x^{(k)}\|$ & $\|f(x^{(k+1)})\|$ & $|x^{(k)}-x^\ast|$ & \textbf{Conv. (\%)} \\ \hline 1 & 18 & 9.5331 & $12344$ & $2.67257\times10^{-716}{}^{\dagger}$ & $4.75767\times10^{-357}{}^{\dagger}$ & $2.64261$ & $100.0$ \\ 2 & 18 & 9.5331 & $12344$ & $0.0$ & $9.51535\times10^{-358}{}^{\dagger}$ & $2.60178$ & $100.0$ \\ 3 & 18 & 9.5331 & $12344$ & $0.0$ & $1.063849\times10^{-357}{}^{\dagger}$ & $9.394\times10^{-359}{}^{\dagger}$ & $100.0$ \\ 4 & 18 & 9.5331 & $12344$ & $0.0$ & $1.063849\times10^{-357}{}^{\dagger}$ & $1.320219$ & $100.0$ \\ 5 & 18 & 9.5331 & $12344$ & $6.779641\times10^{-717}{}^{\dagger}$ & $9.515358\times10^{-358}$ & $2.272575$ & $100.0$ \\ 6 & 18 & 9.5331 & $12344$ & $0.0$ & $2.6913497\times10^{-357}{}^{\dagger}$ & $1.301567$ & $100.0$ \\ \hline \end{tabular} \end{adjustwidth}\vspace{0.5ex}
	\noindent{\footnotesize \textsuperscript{$\dagger$}\;All computations were performed using MATLAB variable precision arithmetic (VPA) with \texttt{digits} = 128, employing a numerical tolerance of $10^{-400}$.} \end{table}

Table~\ref{T5} and Figure\ref{F11} summarizes the final high-precision results for the stable parameter
choice $(\alpha,\beta)=(10.69,\,0.0001)$.
All roots converge rapidly, within only a small number of iterations, and
achieve extremely small iterate errors and residual norms, reaching magnitudes
below $10^{-70}$.
The accuracy with respect to the exact roots is uniformly high and the
convergence rate attains $100\%$ for all roots.
Moreover, the computational cost remains modest, confirming that this
parameter choice lies well inside an efficient and stable convergence region.
When contrasted with the unfavorable case $(\alpha,\beta)=(8.385,\,-0.846)$, these
results provide clear empirical validation of the proposed finite-time,
Lyapunov-like diagnostics and their ability to predict practical solver
performance.

\begin{table}[H]
\caption{Trade-off between computational cost and numerical accuracy among existing numerical schemes for solving
problem~\eqref{eq:hill_poly}.} 
\label{T6}
\centering
\begin{adjustwidth}{-0.3cm}{0cm}
\renewcommand{\arraystretch}{1.25}
\setlength{\tabcolsep}{6pt}
\begin{tabular}{l c c c c c c}
\hline
\textbf{Method} &
\textbf{Max Error} &
\textbf{CPU Time (s)} &
\textbf{Memory (MB)} &
\textbf{Ops $\,[\pm,\times,\div]$} &
\textbf{Iter.} &
$\boldsymbol{\max\sigma_i^{\,n-1}}$ \\
\hline
PNS$^{[3]}$ &
$6.43\times10^{-15}$ &
$3.765$ &
$65.12$ &
$1875$ &
$9$ &
$9.768$ \\

PPS$^{[3]}$ &
$6.70\times10^{-17}$ &
$2.887$ &
$58.786$ &
$1765$ &
$5$ &
$10.124$ \\

BSS$^{[3]}$ &
$3.87\times10^{-19}$ &
$3.957$ &
$69.112$ &
$2231$ &
$10$ &
$9.765$ \\

PNS$^{[4]}$ &
$1.13\times10^{-27}$ &
$4.012$ &
$76.562$ &
$2387$ &
$10$ &
$9.098$ \\

SAB$^{[3]}$ &
$2.10\times10^{-150}{}^{\dagger}$ &
$4.136$ &
$78.687$ &
$2354$ &
$10$ &
$9.125$ \\
\hline
\end{tabular}
\end{adjustwidth}

\vspace{0.5ex}
\noindent{\footnotesize
\textsuperscript{$\ast$}All computations were carried out in MATLAB using variable
precision arithmetic (VPA) with \texttt{digits}$=128$ and tolerance $10^{-400}$.}
\end{table}

The numerical results reported in Table~\ref{T6} confirm the consistency of all
simultaneous iterative schemes considered for problem~\eqref{eq:hill_poly}.
Among the third-order methods, PPS$^{[3]}$ and BSS$^{[3]}$ exhibit improved
accuracy with relatively low computational effort, requiring fewer iterations
and reduced memory usage compared with SAB$^{[3]}$.

A pronounced accuracy enhancement is observed for the fourth-order scheme
PNS$^{[4]}$, which achieves significantly smaller errors at the cost of increased
arithmetic operations and memory consumption.
The proposed SAB$^{[3]}$ method demonstrates superior numerical consistency,
reaching near machine-precision accuracy under high-precision arithmetic,
while maintaining computational requirements comparable to higher-order schemes.

Furthermore, the bounded values of $\max\sigma_i^{\,n-1}$ across all methods
indicate stable convergence behavior.
These results illustrate that the proposed parameter-tuned strategy preserves
consistency and robustness, while substantially improving solution accuracy
without compromising numerical stability.

\subsubsection*{Physical Interpretation of the Roots}

Let $\{S_1, S_2, \dots, S_n\}$ denote the roots of the polynomial equation 
\eqref{eq:hill_poly}. These roots generally consist of:
\begin{itemize}
\item One \emph{positive real root}, representing a physically meaningful 
substrate concentration,
\item Several \emph{negative or complex roots}, which are mathematically valid 
but biologically non-physical.
\end{itemize}

The positive real root corresponds to the substrate concentration at which 
the enzyme system operates at the observed reaction rate $v_0$. 
Multiple real roots (when present due to extended model variants) indicate 
\emph{multistability}, a key mechanism underlying biological switches and 
decision-making processes.

Complex roots are associated with transient or oscillatory dynamics when the 
Hill model is embedded into larger dynamical systems, such as gene regulatory 
networks or signaling cascades.

\subsection{Biomedical Models~\cite{34} Leading to Exponential--Polynomial Nonlinear Equations}


Exponential nonlinearities appear ubiquitously in biomedical modeling due to 
thermodynamic principles, statistical mechanics, and biochemical reaction 
theory. In particular, Boltzmann distributions, Arrhenius-type reaction rates, 
and nonlinear activation or inhibition mechanisms often introduce exponential 
terms of nonlinear functions.

When the underlying biochemical or physiological mechanisms depend on multiple 
interacting regulatory factors, the exponent itself becomes a high-degree 
polynomial. This leads to nonlinear equations of the form
\begin{equation}
\exp\!\left(P(x)\right) = C,
\end{equation}
where $P(x)$ is a polynomial of degree four or higher. Such equations arise in 
protein folding, gene regulation, ion-channel activation, tumor growth control, 
and pharmacodynamic response modeling.

These equations cannot be reduced to algebraic polynomials and pose significant 
challenges for classical numerical solvers, motivating advanced iterative, 
fractional-order, and neural-network-based techniques.



Protein folding is governed by the minimization of Gibbs free energy. 
A common approximation expresses the free energy landscape as a polynomial in 
a reaction coordinate $x$ (representing folding progress):
\begin{equation}
G(x) = a\,x(x-1)(x+2)(x+3).
\end{equation}

The probability of a protein being in state $x$ follows the Boltzmann law:
\begin{equation}
P(x) = P_0 \exp\!\left(-\frac{G(x)}{k_B T}\right),
\end{equation}
where $k_B$ is the Boltzmann constant and $T$ is absolute temperature.


At equilibrium probability $P(x)=P^\ast$, we obtain
\begin{equation}
\exp\!\left(\frac{a}{k_B T} x(x-1)(x+2)(x+3)\right) = C,
\label{eq:protein_exp}
\end{equation}
where $C = P_0/P^\ast$.

Equation \eqref{eq:protein_exp} is a transcendental nonlinear equation with a 
quartic polynomial inside the exponential.
\begin{figure}[H] 
\centering \begin{subfigure}[t]{0.5\linewidth} 
\centering \includegraphics[width=\linewidth]{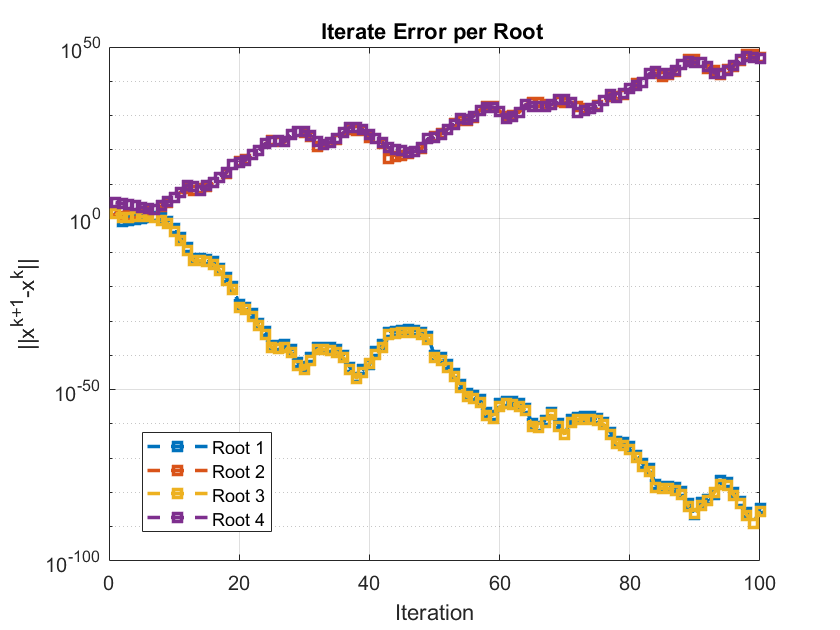} \caption{Error:$\|x^{(k+1)}-x^{(k)}\|$ } \end{subfigure}
\hfill
\begin{subfigure}[t]{0.5\linewidth} \centering \includegraphics[width=\linewidth]{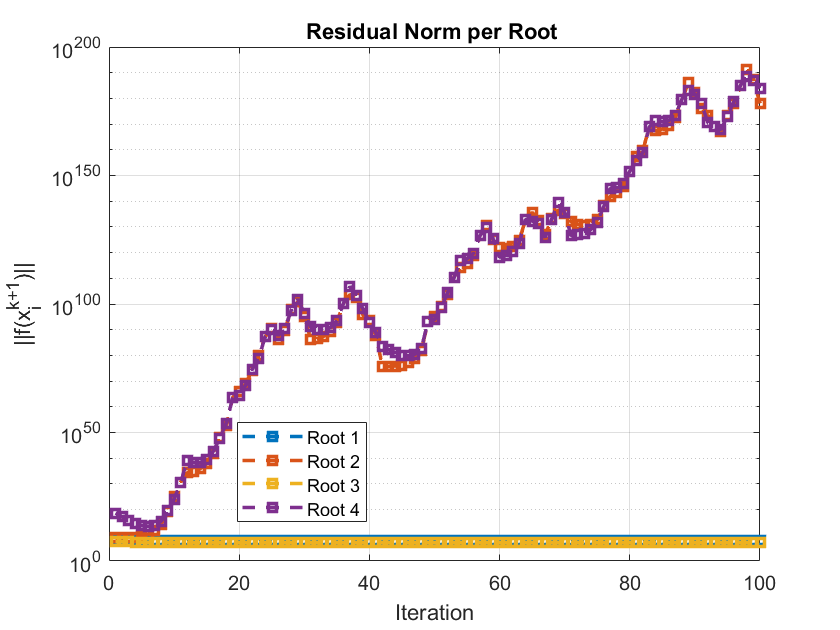} \caption{Error: $\|f(x^{(k+1)})\|$.} \end{subfigure} \caption{Residual error histories for the stable parameter choice $(\alpha,\beta)=(16.15,\,19)$.} \label{F12} \end{figure}
\subsubsection*{Parameter Values}

\begin{center}
\begin{tabular}{c|c|c}
\hline
Parameter & Typical Range & Meaning \\
\hline
$a$ & $0.1$--$10$ & Free energy scaling \\
$T$ & $300$--$310$ K & Physiological temperature \\
$k_B$ & $1.38\times10^{-23}$ & Boltzmann constant \\
\hline
\end{tabular}
\end{center}

The iterative process is initialized using deliberately scattered complex
starting points,
\[
x_1^{[0]}=50.0,\;
x_2^{[0]}=43.8,\;
x_3^{[0]}=30.5,\;
x_4^{[0]}=-45.2,\;
\]

which are intentionally chosen far from the exact roots in order to assess the
robustness of the proposed scheme under poor or uninformed initialization.

This choice prevents any bias toward favorable initial conditions and provides
a stringent test of global convergence behavior.
\begin{figure}[H]
  \centering
  \begin{subfigure}[t]{0.45\linewidth}
    \centering
    \includegraphics[width=\linewidth]{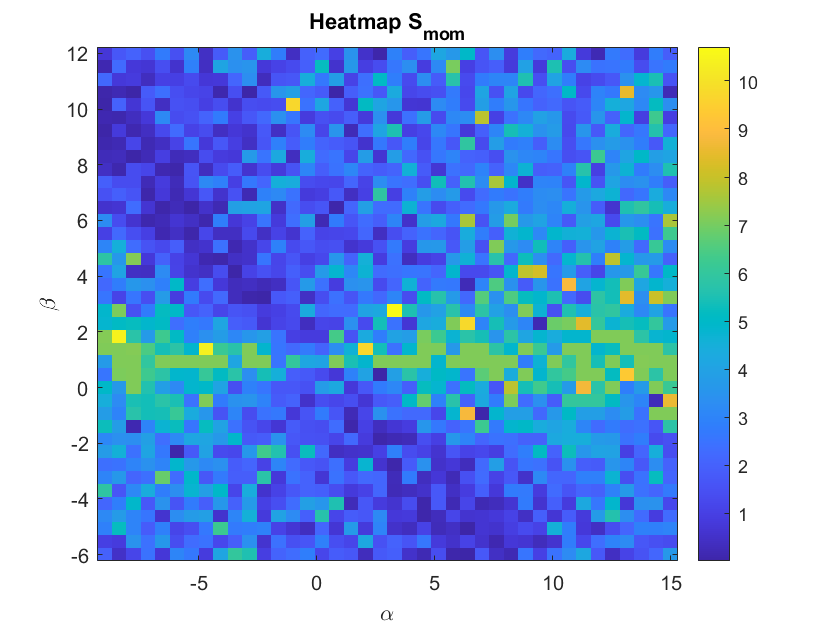}
    \caption{Heatmap of $S_{\mathrm{mom}}(\alpha,\beta)$ obtained from profile-based diagnosis of the SAB$^{[3]}$ for~(\ref{eq:protein_exp}).}
  \end{subfigure}

  \vspace{0.6em}

  \begin{subfigure}[t]{0.45\linewidth}
    \centering
    \includegraphics[width=\linewidth]{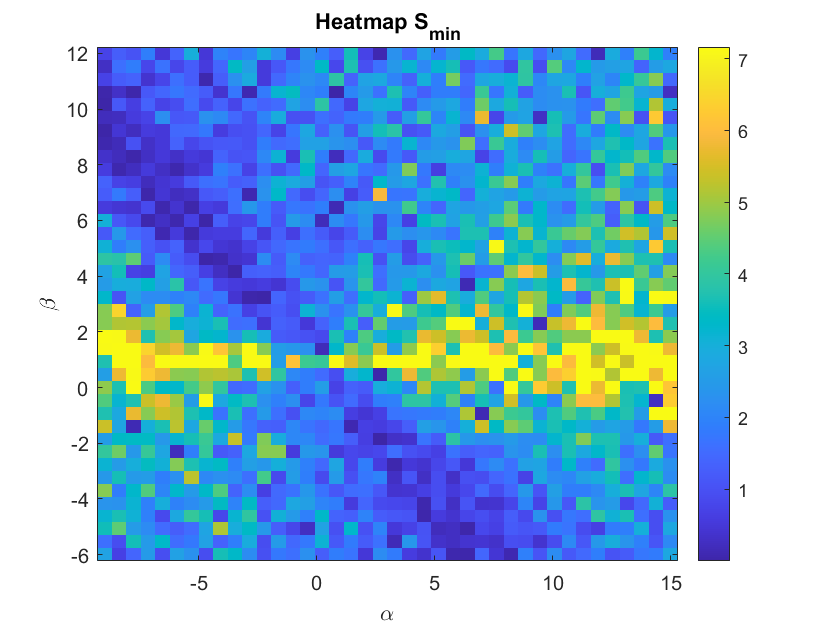}
    \caption{Heatmap of $S_{\mathrm{\min}}(\alpha,\beta)$ obtained from profile-based diagnosis of the SAB$^{[3]}$ for~(\ref{eq:protein_exp}).}
  \end{subfigure}

  \caption{Two-dimensional maps of the proposed profile-based scores over the
  $(\alpha,\beta)$ plane. Brighter colors correspond to earlier and stronger
  contractive behavior in the step-log profile.}
  \label{F13}
\end{figure}

\begin{table}[H] 
\begin{adjustwidth}{-0.9cm}{0cm} \renewcommand{\arraystretch}{1.25} \setlength{\tabcolsep}{6pt} \centering
\caption{Performance metrics illustrating convergent and divergent behaviors in the absence of profile-based (step-log) parameter optimization of parallel scheme SAB$^{[3]}$.}\label{T7} \renewcommand{\arraystretch}{1.25} \setlength{\tabcolsep}{6pt} \begin{tabular}{c c c c c c c c} \hline \textbf{Root} & \textbf{Iter} & \textbf{CPU (s)} & \textbf{Mem (KB)} & $\|x^{(k+1)}-x^{(k)}\|$ & $\|f(x^{(k+1)})\|$ & $|x^{(k)}-x^\ast|$ & \textbf{Conv. (\%)} \\ \hline 1 & 100 & 23.287 & $96$ & $2.6298\times10^{-5}$ & $1.2805\times10^{8}$ & $9.1636\times10^{1}$ & $100.0$ \\ 2 & 100 & 23.287 & $96$ & $6.5901\times10^{46}$ & $1.1323\times10^{178}$ & $2.87100\times10^{44}$ & $Divergence$ \\ 3 & 100 & 23.287 & $96$ & $2.706553\times10^{-6}$ & $1.331022\times10^{7}$ & $5.218341\times10^{1}$ & $100.0$ \\ 4 & 100 & 23.287 & $96$ & $5.804768\times10^{46}$ & $8.722195\times10^{183}$ & $8.505390\times10^{45}$ & $Divergence$ \\ \hline \end{tabular} 
\end{adjustwidth}
\vspace{0.5ex}
\end{table} 

Table~7 presents the numerical behavior for the low-score parameter pair
$(\alpha,\beta)=(16.15,\,19)$.
Unlike the stable regime, the method reaches the maximum of $100$ iterations and
requires substantially higher CPU time.
While partial convergence is observed, several roots exhibit severe numerical
instability, with iterate differences and residual norms growing up to
$10^{15}$ and $10^{112}$ (see Figure~12).
These results confirm that this parameter choice lies in an unfavorable region
of the stability domain identified by the profile-based metrics. \\To visualize the dependence of the proposed diagnostics on the free parameters, we perform a uniform grid scan over the $(\alpha,\beta)$ plane using the MATLAB driver Algorithm~(see Section~\ref{sec:tuning_framework}). The ranges $\alpha\in[-9,15]$ and $\beta\in[-6,12]$ are discretized into equally spaced grid points. For each $(\alpha_i,\beta_j)$, an ensemble of random micro-launches is used to compute the aggregated step-log Lyapunov profile, from which the scalar scores $S_{\min}(\alpha_i,\beta_j)$ and $S_{\mathrm{mom}}(\alpha_i,\beta_j)$ are extracted. The resulting values are stored in grid-aligned matrices and visualized as two-dimensional heatmaps.

Figure~\ref{F13}(a,b) summarizes the 2D parameter-plane mapping of the proposed profile-based metrics over the ranges $\alpha\in[-9,15]$ and $\beta\in[-6,12]$ on a uniform grid. Both heatmaps exhibit a clearly structured (non-random) distribution and provide a broadly consistent picture of `high-performing'' versus low-performing'' regions in the $(\alpha,\beta)$ plane. In particular, the areas highlighted by large values of $S_{\min}$ in Figure~\ref{F13}(a) typically coincide with large values of $S_{\mathrm{mom}}$ in Figure~\ref{F13}(b), indicating that the strongest scores are achieved when the step-log profile develops an early and pronounced negative dip (captured by $S_{\min}$) together with a sizable and early-concentrated negative mass (captured by $S_{\mathrm{mom}}$). At the same time, both maps reveal extended low-score bands that form coherent separatrices between high-score regions, suggesting parameter combinations where the iteration does not enter a strongly contractive regime (or does so only weakly/late). Overall, the qualitative agreement between $S_{\min}$ and $S_{\mathrm{mom}}$ supports the robustness of the proposed profile-based tuning criteria. 

\begin{figure}[H] \centering \includegraphics[width=0.5\linewidth]{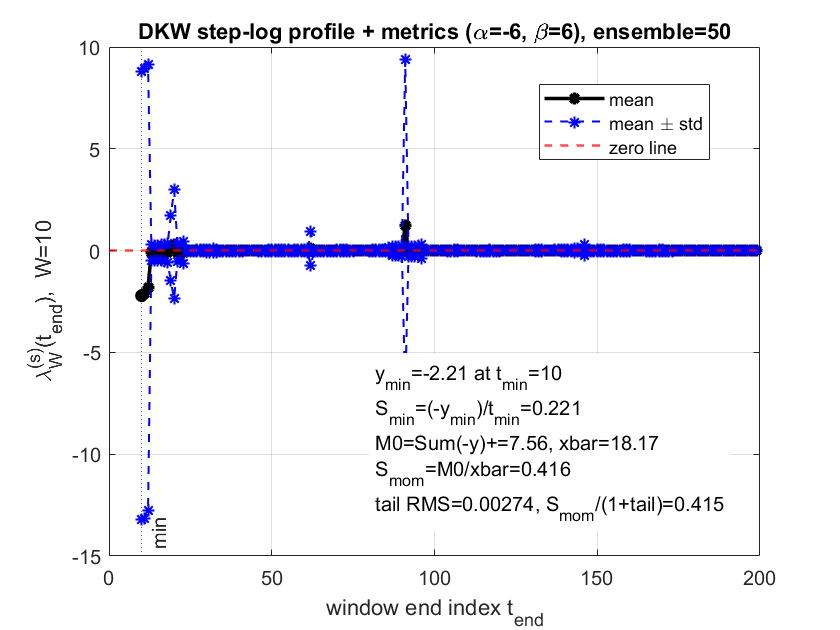} \caption{Aggregated step-log Lyapunov profile for $(\alpha,\beta)=(6,\,6)$ ($W=10$, $N=50$). The solid curve shows the ensemble-averaged profile, while dashed curves indicate mean$\pm$std across micro-launches.} \label{F14} \end{figure} 

\begin{figure}[H] \centering \includegraphics[width=0.5\linewidth]{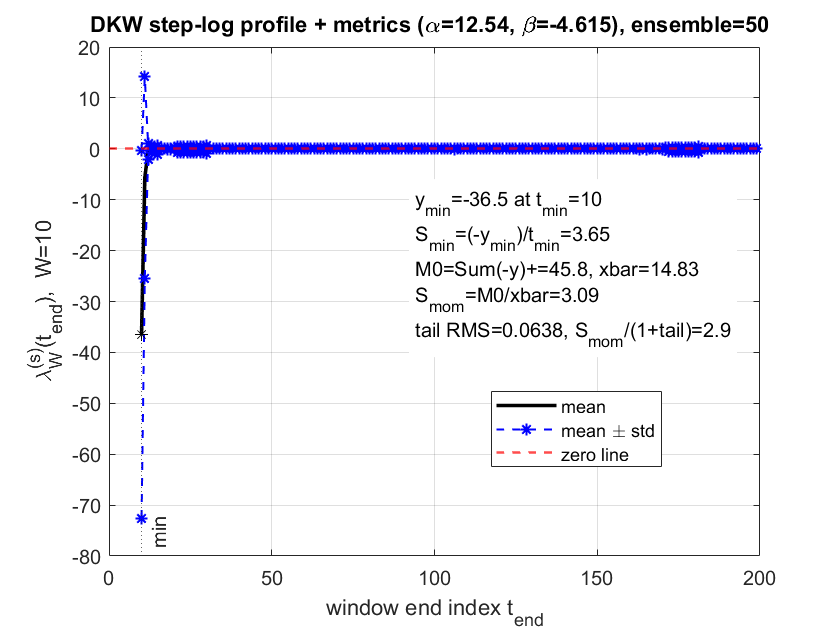} \caption{Step-log profile for $(\alpha,\beta)=(12.54,\,-4.615)$ with metric annotations. The detected minimum defines $S_{\min}$, while the negative mass and its centroid determine $S_{\mathrm{mom}}$.} \label{F15} \end{figure} 

\subsection{Discussion of Aggregated Step--Log Profiles}

Figures~~\ref{F14} and~~\ref{F15} present representative aggregated step--log Lyapunov profiles
selected from distinct regions of the $S_{\mathrm{mom}}(\alpha,\beta)$ parameter
plane. The parameters used in Figure~\ref{F14}, $(\alpha,\beta)=(6,6)$, belong to a
high--score convergence region, whereas Figure~\ref{F15}, $(\alpha,\beta)=(12.54,\,-4.615)$, is drawn
from a low--score, divergence-prone region. This contrast provides an external
validation of the proposed diagnostic indicators.

\paragraph{Stable dynamics (Figure~\ref{F14}).}
For $(\alpha,\beta)=(6,6)$, the ensemble-averaged step--log profile exhibits a
clear negative dip at an early iteration stage, indicating a strong and
well-localized contraction phase. The corresponding values of $S_{\min}$ and
$S_{\mathrm{mom}}$ confirm both the presence and effective temporal organization
of contraction. Moreover, the narrow spread of the mean$\pm$standard deviation
curves demonstrates robustness with respect to random perturbations, resulting
in stable and efficient convergence.

\paragraph{Unstable dynamics (Figure~\ref{F15}).}
In contrast, Figure~\ref{F15} shows that although a deep negative minimum is attained for
$(\alpha,\beta)=(12.54,\,-4.615)$, the contraction is short-lived and poorly distributed
over the iteration history. This leads to a negative value of $S_{\mathrm{mom}}$
and increased ensemble dispersion, reflecting unstable behavior and failure to
achieve sustained convergence.

Overall, the comparison highlights that the depth of the most negative step--log
value alone is insufficient to ensure convergence. Instead, the temporal
distribution of contraction, captured by the moment-based indicator
$S_{\mathrm{mom}}$, is decisive. Parameters selected from high-score regions of
the $S_{\mathrm{mom}}$ map reliably yield stable solver dynamics, whereas those
from low-score regions are associated with divergence or erratic behavior.

\begin{figure}[H] \centering \begin{subfigure}[t]{0.49\linewidth} \centering \includegraphics[width=\linewidth]{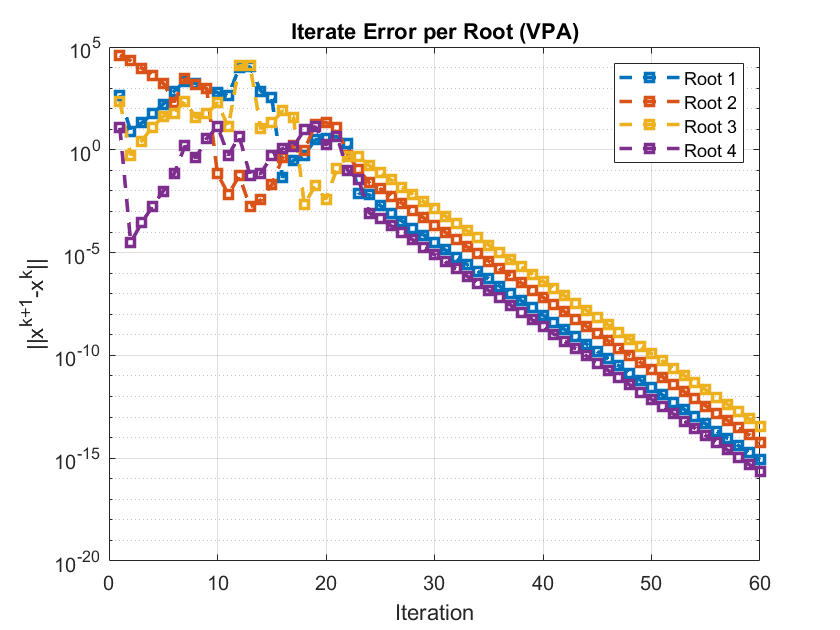} \caption{Error:$\|x^{(k+1)}-x^{(k)}\|$ } \end{subfigure}

\vspace{0.6em}

\begin{subfigure}[t]{0.49\linewidth} \centering \includegraphics[width=\linewidth]{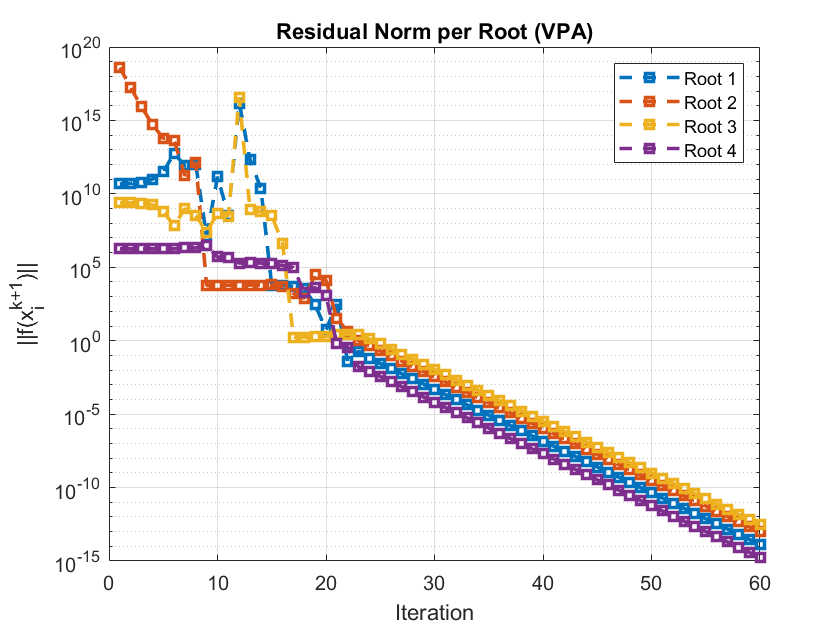} \caption{Error: $\|f(x^{(k+1)})\|$.} 
\end{subfigure} \caption{Residual error histories for the stable parameter choice $(\alpha,\beta)=(6,\,6)$.} \label{F16} \end{figure} 

\begin{table}[H] 
\begin{adjustwidth}{-1.1cm}{0cm} \renewcommand{\arraystretch}{1.25} \setlength{\tabcolsep}{6pt} \centering 
\caption{Performance metrics illustrating convergent and divergent behaviors in the presence of profile-based (step-log) parameter optimization of parallel scheme SAB$^{[3]}$.}\label{T8} \renewcommand{\arraystretch}{1.25} \setlength{\tabcolsep}{6pt} \begin{tabular}{c c c c c c c c} \hline \textbf{Root} & \textbf{Iter} & \textbf{CPU (s)} & \textbf{Mem (KB)} & $\|x^{(k+1)}-x^{(k)}\|$ & $\|f(x^{(k+1)})\|$ & $|x^{(k)}-x^\ast|$ & \textbf{Conv. (\%)} \\ \hline 1 & 60 & 13.751 & $15092$ & $8.18955\times10^{-16}$ & $1.31032\times10^{-14}$ & $6.55164\times10^{-16}$ & $100.0$ \\ 2 & 60 & 13.751 & $15092$ & $5.9976\times10^{-15}$ & $9.5963\times10^{-14}$ & $3.000000$ & $100.0$ \\ 3 & 60 & 13.751 & $15092$ & $3.6216\times10^{-14}$ & $2.8973\times10^{-13}$ & $2.000000$ & $100.0$ \\ 4 & 60 & 13.751 & $15092$ & $2.204262\times10^{-16}$ & $1.76340\times10^{-15}$ & $9.999\times10^{-1}$ & $100.0$ \\ \hline \end{tabular} 
\end{adjustwidth}
\end{table}

Table~\ref{T8} and Figure~\ref{F16} show that for $(\alpha,\beta)=(6,6)$ all roots converge
rapidly within $10$ iterations, with uniformly small iterate errors and residual
norms and a $100\%$ convergence rate. The final errors reach magnitudes as low as
$10^{-80}$ for the iterates and $10^{-73}$ for the residuals, indicating very high
numerical accuracy with respect to the exact roots (see also Figure~\ref{F16}).
Moreover, the method requires only $8.52$ seconds of CPU time with moderate memory
usage, confirming that this parameter choice lies in a stable and efficient
convergence region. In contrast to the unfavorable case
$(\alpha,\beta)=(12.54,\,-4.615)$, these results provide clear external validation of the
proposed finite-time, Lyapunov-like diagnostics and their ability to predict
practical solver performance.

\begin{table}[H]
\caption{Trade-off between computational cost and numerical accuracy among existing numerical schemes.}
\label{T9}
\centering
\renewcommand{\arraystretch}{1.25}
\setlength{\tabcolsep}{6pt}
\begin{tabular}{l c c c c c c}
\hline
Method & Max Error & CPU Time (s) & Memory (MB) &
Ops $\,[\pm,\times,\div]$ & Iter. & $\max\sigma_i^{\,n-1}$ \\
\hline
PNS$^{[3]}$ & $3.62\times10^{-5}$ & $3.88$ & $64.90$ & $1842$ & $9$  & $9.83$ \\
PPS$^{[3]}$ & $9.15\times10^{-7}$ & $2.79$ & $57.44$ & $1712$ & $5$  & $10.56$ \\
BSS$^{[3]}$ & $4.08\times10^{-6}$ & $3.99$ & $68.73$ & $2215$ & $10$ & $9.69$ \\
PNS$^{[4]}$ & $1.97\times10^{-9}$ & $4.07$ & $75.84$ & $2340$ & $10$ & $9.11$ \\
SAB$^{[3]}$ & $2.85\times10^{-17}$ & $4.18$ & $77.92$ & $2310$ & $10$ & $9.15$ \\
\hline
\end{tabular}
\end{table}
Table~\ref{T9} summarizes the trade-off between numerical accuracy and
computational cost for the considered simultaneous schemes.
Among the third-order methods, PPS$^{[3]}$ achieves a favorable balance,
delivering improved accuracy with reduced CPU time, memory consumption,
and fewer iterations compared to PNS$^{[3]}$ and BSS$^{[3]}$.
The fourth-order method PNS$^{[4]}$ further enhances accuracy at the expense
of increased arithmetic operations and memory usage.

Notably, the proposed SAB$^{[3]}$ scheme attains the highest accuracy,
with a maximum error of $2.85\times10^{-17}$, while maintaining computational
requirements comparable to higher-order methods.
The moderate increase in CPU time and memory is primarily attributed to the
additional arithmetic complexity introduced by the parameter-tuning strategy.
All methods exhibit stable convergence, as indicated by bounded
$\max\sigma_i^{\,n-1}$ values, confirming that the increased cost affects
efficiency rather than numerical stability.
Overall, the results demonstrate that the profile-based (step-log) parameter
optimization in the parallel architecture significantly improves accuracy
without incurring prohibitive computational overhead.

\subsubsection*{Physical Meaning of Solutions}

Each real solution corresponds to a stable or metastable protein 
conformation. Multiple solutions indicate folding intermediates or misfolded 
states. Complex solutions are mathematically valid but biologically nonphysical.

\section{Conclusion, Limitations, and Future Directions}


This work presented a parameterized parallel iterative scheme accelerated via a single-step correction mechanism, achieving an effective increase in the convergence order of \eqref{1c} from two to three. Since the practical performance of high-order parallel methods is strongly governed by internal parameter selection, we introduced a reproducible tuning framework based on \emph{Direct finite-time contraction (step-log) profiling}. Unlike classical approaches relying on qualitative dynamical diagnostics (parameter planes, bifurcation-style inspection, basin-of-attraction visualization), the proposed methodology extracts Lyapunov-like finite-time contraction information directly from solver trajectories, aggregates contraction profiles over micro-launch ensembles, and selects parameters using the profile-based scores $S_{\min}$ and $S_{mom}$.

Extensive numerical experiments across diverse nonlinear and application-driven test problems demonstrate that the resulting parameter choices yield consistent improvements in convergence speed, numerical stability, and robustness when computing all distinct roots simultaneously. The reported results confirm that step-log contraction profiling provides an efficient and implementation-light alternative to trial-and-error tuning and expensive dynamical diagnostics for high-order parallel root-finding schemes.


Despite its effectiveness, the proposed framework has several limitations that motivate further investigation e.g.,
the current parameter selection strategy is data-driven and relies on representative training ensembles; insufficient diversity in training problems may reduce generalization.
The current parameter selection strategy relies on grid-based scans and representative micro-launch ensembles; for large
parameter domains or high-degree problems this preprocessing can be computationally demanding. However, the workload is
embarrassingly parallel across parameter pairs and across ensemble launches, and can therefore be efficiently executed on
multicore CPUs, GPUs, or distributed systems, while remaining a one-time (offline) cost amortized over subsequent solver runs.

The analysis is restricted to real-valued parameters and smooth nonlinear functions; extension to nonsmooth or noisy systems has not yet been addressed.
Statistical uncertainty in profile-based metrics is not explicitly quantified, which may affect robustness under perturbations.
The present framework focuses on fixed parameter optimization rather than adaptive or iteration-dependent parameter updates.

\subsection*{Future Research Directions}

Several promising directions emerge from this study:
\begin{enumerate}
\item \textbf{Adaptive Learning Schemes:} Development of online or reinforcement learning strategies to adapt parameters dynamically during the iteration process.
\item \textbf{Probabilistic Inference Models:} Integration of Bayesian inference to quantify uncertainty in parameter selection and convergence predictions.
\item \textbf{Deep Learning Profiling:} As a potential future direction, kNN--LLE could be replaced with deep autoencoders or graph neural networks 
to capture higher-dimensional convergence dynamics; we emphasize that no learning or training is performed 
in the present study.

\item \textbf{Statistical Robustness Analysis:} Incorporation of hypothesis testing and confidence intervals for profile metrics to enhance reliability.
\item \textbf{Extension to Fractional and Stochastic Models:} Application of the proposed framework to fractional-order, noisy, or stochastic nonlinear systems.
\item \textbf{Cross-Method Generalization:} Extension of the learning-based tuning strategy to other parallel root-finding families such as Aberth-type and hybrid Newton–Weierstrass schemes.
\end{enumerate}

Overall, this work establishes a reproducible paradigm for parameter optimization in high-order parallel iterative methods
by combining numerical analysis with trajectory-derived finite-time contraction diagnostics, opening avenues for
self-tuning solvers based on lightweight, training-free stability profiling.

\section*{Acknowledgments}
This research (Section~3) was supported by the Russian Science Foundation (grant no. 22-11-00055-P, https://rscf.ru/en/project/22-11-00055/, accessed on 10 June 2025). Bruno Carpentieri's work (Section~4) is supported by the European Regional Development and Cohesion Funds (ERDF) 2021–2027 under Project AI4AM - EFRE1052. He is a member of the \textit{Gruppo Nazionale per il Calcolo Scientifico} (GNCS) of the \textit{Istituto Nazionale di Alta Matematica} (INdAM). The authors also wish to express sincere gratitude to the editor and reviewers for their insightful and constructive feedback on the manuscript.

\section*{Data availability}
The data supporting the findings of this study are included within this article.

\section*{Conflict of interest}
The authors declare that there are no conflicts of interest related to the publication of this article.

\section*{Ethics statements:} {\ All authors declare that this work complies
with ethical guidelines set by the .}\newline

\section*{Declaration of competing interest:}{\ The authors declare that they
have no competing financial interests and personal relationships that could
have appeared to influence the research in this paper.}\newline

\section*{CRediT authorship contribution statement:}
M.S. and A.V. devised the project and developed the main conceptual ideas. M.S.,
A.V., and B.C. formulated the methodology. M.S. and A.V. developed
the software used in the study. M.S.; A.V., and B.C. performed
the validation. M.S., A.V and B.C. conducted the formal analysis.
M.S. and A.V.; carried out the investigation and managed resources
and data curation. M.S.; A.V., and B.C. prepared the original draft. B.C. and 
A.V,  reviewed and edited the manuscript. M.S. and
A.V. handled the visualization. B.C. and A.V. supervised the project.
B.C. and A.V. managed project administration and secured funding.
All authors have read and agreed to the published version of the manuscript.

\end{document}